\documentclass[preprint,aos]{imsart}

\usepackage{amsmath,amsfonts,amsthm,dsfont,accents,tikz,float}
\usepackage[numbers]{natbib}
\RequirePackage[colorlinks,citecolor=green,urlcolor=blue]{hyperref}

\numberwithin{equation}{section}
\setlength{\parindent}{0in}  

\makeatletter

\newcommand{\Rmnum}[1]{\expandafter\@slowromancap\romannumeral #1@}

\def\widebar{\accentset{{\cc@style\underline{\mskip10mu}}}}
\makeatother

\newcommand{\E}{\mathbb{E}}
\newcommand{\F}{\mathcal{F}}
\newcommand{\T}{\mathrm{T}}
\newcommand{\ds}{\,\mathrm{d}}
\newcommand{\e}{\varepsilon}
\newcommand{\rr}{\gamma}
\newcommand{\R}{\mathbb{R}}
\newcommand{\s}{\mathcal{S}}
\newcommand{\I}{\mathbf{I}_d}
\newcommand{\1}[1]{\mathds{1}_{\{#1\}}}

\theoremstyle{plain}

\newenvironment{assump}[1]
	{\assumption}
	{\endassumption}

\newtheorem{lem}{Lemma}
\newtheorem{thm}{Theorem}
\newtheorem{prop}{Proposition}
\newtheorem{corol}{Corollary}

\theoremstyle{definition}
\newtheorem{defn}{Definition}
\newtheorem*{remk}{Remark}

\graphicspath{{pics/}}

\begin{document}

\begin{frontmatter}
\title{Inference for Volatility Functionals of Multivariate It\^o Semimartingales Observed with Jump and Noise}
\runtitle{Volatility Functionals via Local Moving Averages}

\author{\fnms{Richard Y.} \snm{Chen}\thanksref{t} \ead[label=e1]{yrchen@uchicago.edu}}
\thankstext{t}{Supported by National Science Foundation under grant DMS 17-13129.}
\affiliation{University of Chicago}
\runauthor{Chen, R. Y.}
\address{Department of Statistics\\
	University of Chicago\\
	Chicago, IL 60637\\
	\printead{e1}\phantom{E-mail:\ }}

\begin{abstract}
	This paper presents the nonparametric inference for nonlinear volatility functionals of general multivariate It\^o semimartingales, in high-frequency and noisy setting. Pre-averaging and truncation enable simultaneous handling of noise and jumps. Second-order expansion reveals explicit biases and a pathway to bias correction. Estimators based on this framework achieve the optimal convergence rate. A class of stable central limit theorems are attained with estimable asymptotic covariance matrices. This paper form a basis for infill asymptotic results of, for example, the realized Laplace transform, the realized principal component analysis, the continuous-time linear regression, and the generalized method of integrated moments, hence helps to extend the application scopes to more frequently sampled noisy data.
\end{abstract}
\begin{keyword}[class=MSC]
	62M09, 60G44, 62G05, 62G15, 62G20
\end{keyword}
\begin{keyword}
	\kwd{stochastic volatility}
	\kwd{It\^o semimartingales}
	\kwd{noise}
	\kwd{nonlinear functionals}
	\kwd{high-order bias}
	\kwd{stable central limit theorems}
	\kwd{optimal rate}
\end{keyword}
\end{frontmatter}

\section{Introduction}\label{sec:intro}
This paper concerns statistical inference and applications for integrated volatility functionals from high-frequency data modeled by an It\^o semimartingale observed with noise. The functionals have of the form
\begin{equation}\label{def.S(g)}
	S(g)_t = \int_0^t g(c_s)\ds s,
\end{equation}
here $t$ is finite,
$g:\R^{d\times d}\mapsto \R^r$ is any three-times continuously differentiable function on some compact set, 
$c_s$ is a positive-definite matrix that is the instantaneous covariance of the continuous part of the It\^o semimartingale.

In absence of noise, inferential frameworks of volatility functional estimation were established by \cite{jr13, llx19, mz09}. Subsequently, specialized methodologies for various applications with novel empirical results blossomed in recent years, for example, \cite{lx16,ax19,ltt17}.

To cope with noise, this paper embeds the \textit{pre-averaging} method \cite{jpv10, j09, pv09b} into the general framework \cite{jr13}. In this sense, this work extends the inferential framework to accommodate noisy data, and generalizes the pre-averaging method to nonlinear transformations of volatility. On the road to a rate-optimal central limit theorem (CLT) with such generality, there are the following technicalities:
\begin{itemize}
	\item \textit{Stochastic volatility}: an nonparametric model (\ref{def.X}) is used for robustness, yet, it becomes crucial to simultaneously control statistical error (due to noise) and discretization error (attributable to evolving parameters).
	\item \textit{Noise \& Jump}: there is an interplay between noise and jump, which necessitates truncating jumps on top of local moving averages, in order to recover volatility from noisy and jumpy observations.
	\item \textit{Dependence}: because of overlapping windows in pre-averaging, the local moving averages are highly correlated to which standard CLTs does not apply. The ``big block - small block'' technique of \cite{j09} is used instead.
	\item \textit{Bias}: generally there is an asymptotic bias due to nonlinearity of $g$ in (\ref{def.S(g)}). In this paper, the bias is explicitly calculated and removed.
	\item \textit{Unbounded derivatives}: some important applications, e.g., precision matrix estimation and linear regression, correspond to $g$'s with singularities in derivatives near the origin, where the original framework \cite{jr13} does not apply. A spatial localization argument by \cite{ltt17} is called upon in conjunction with a uniform convergence result.
\end{itemize}
It is the author's sincere hope, by solving these technicalities above, this paper will be able to offer a share of contribution to push the inferential framework to a new frontier of potentials and possibilities, and lend the effort to extend the corresponding applications to adopt noisy high-frequency data where exciting new stories await.

\section{Setting}\label{sec:model}
\subsection{Model}
This paper assumes the data is generated from a process $Y$, and for any $t>0$ there is a probability transition kernel $Q_t$ linking another process $X$ to $Y$ where $X$ is a solution to the stochastic differential equation
\begin{equation}\label{def.X}
	X_t = X_0 + \int_0^tb_s\ds s + \int_0^t\sigma_s\ds W_s + J_t
\end{equation}
$b_s\in\R^d$, $\sigma_s\in\R^{d\times d'}$ with $d\le d'$ and the volatility $c_s=\sigma_s\sigma_s^\T$ is positive semidefinite, $W$ is a $d'$-dimensional standard Brownian motion, $J$ is purely discontinuous process described by (\ref{J}). 

In this model, the noisy observations are samples from $Y$, and the underlying process before noise contamination is assumed as an It\^o semimartingale. 
\begin{center}
	\usetikzlibrary{positioning}
	\usetikzlibrary{shapes,snakes}
	\begin{tikzpicture}[xscale=12,yscale=6,>=stealth]
	\tikzstyle{e}=[rectangle,minimum size=5mm,draw,thick]
	\tikzstyle{v}=[ellipse,  minimum size=5mm,draw,thick]
	\node[e] (X)   [draw=red!60] {It\^o Semimartingale $X$};
	\node[e] (Y)   [draw=blue!70,right=of X] {Noisy Process $Y$};
	\node[v] (D)   [draw=black!60,right=of Y] {Noisy Data};
	\draw[thick,->,snake=snake] (X) to node[anchor=north]{$(Q_t)$} (Y);
	\draw[thick,->] (Y) to node[anchor=north]{sample} (D);
	\end{tikzpicture}
\end{center}
An example of this model is
\begin{equation}\label{def.Y}
	Y_t = f(X_t, \e_t)
\end{equation}
where $\e$ is a white noise process and $f:\R^d\times\R^d\mapsto\R^d$ is such that the conditional mean of $Y_t$ is $X_t$. Generally, the noise model induced by $(Q_t)$ incorporates additive white noise, rounding error, the combination thereof as special cases. Besides the probabilistic structure, the inferential framework also requires additional assumptions:
\begin{itemize}
	\item the drift $b$ has a smooth trajectory in certain sense (see appendix \ref{sec:assump});
	\item the volatility $c$ is a locally spatially restricted It\^o semimartingale\footnote{However, it is important to accommodate long-memory volatility model. The volatility functional inference in long-memory and noisy setting is an open question under investigation.} such that both $c$ and $c^{-1}$ is locally bounded;
	\item $J$ may exhibit infinite activities but has finite variation, i.e., finite-length trajectory;
	\item  the noise variance is an It\^o semimartingale; conditioning on all the information on $X$, there is no autocorrelation in noise.\footnote{When the observations are mixed with colored noise, the statistical property of this methodology is unknown. Since it is empirically important, the author hopes this question can be illuminated by future research.}
\end{itemize}
These assumptions are necessary for the CLT and for applicability over functions of statistical interest. For readers interested in the precise description of the model specification and assumptions, please refer to appendix \ref{sec:assump}.

\subsection{Observations}
This work treats regularly sampled observations and considers in-fill asymptotics\footnote{aka fixed-domain asymptotics, high-frequency asymptotics, small-interval asymptotics}. Specifically, the samples are observed every $\Delta_n$ time units on a finite time interval $[0,t]$ where $n=\lfloor t/\Delta_n\rfloor$ is the sample size. As $n\to\infty$, $\Delta_n\to0$ while $t$ is fixed.

Throughout this paper, $U^n_i$ is written for $U_{i\Delta_n}$ where $U$ can be a process or filtration, for example, $c^n_i$ denotes the value of volatility $c$ at time $i\Delta_n$; for any process $U$, $\Delta^n_iU$ represents the increment $U^n_i-U^n_{i-1}$.

\subsection{Notations} For $r\in\mathbb{N}^+$, $\mathcal{C}^r(\mathcal{S})$ denotes the space of $r$-time continuously differentiable functions on the domain $\mathcal{S}$; $\mathcal{S}^+_d$ is the convex cone of $d\times d$ positive semidefinite matrices; $\|\cdot\|$ denotes a norm on vectors, matrices or tensors; given $a\in\R$, $\lfloor a\rfloor$ denotes the largest integer no more than $a$; $a\vee b=\max\{a,b\}$, $a\wedge b=\min\{a,b\}$; $a_n \asymp b_n$ means both $a_n/b_n$ and $b_n/a_n$ are bounded for large $n$; $\mathbf{A}^\mathrm{T}$ is the transpose of the vector or matrix; for a multidimensional array, the entry index is written in the superscript, e.g., $X_t=(X^1_t,\cdots,X^d_t)^\T$, $c^{jk}$ denotes the $(j,k)$ entry in the matrix $c$; $\partial_{jk}g$ and $\partial^2_{jk,lm}g$ denote the gradient and Hessian with respect to the $(j,k)$-th and $(l,m)$-th entries; $\overset{\mathcal{L}-s(f)}{\longrightarrow}$ (resp. $\overset{\mathcal{L}-s}{\longrightarrow}$) denotes stable convergence of processes (resp. variables) in law\footnote{See section 2.2.1, 2.2.2 in \cite{jp12} for stable convergence. The sampling variation of the estimator depends on the realization of the process $c$, hence we need a mode of convergence in which the estimator converges jointly with other variables, so that one can consistently estimate the asymptotic variance to compute confidence intervals.}; $\overset{u.c.p.}{\longrightarrow}$ denotes uniform convergence in probability on compact sets; $\mathcal{MN}(\cdot,\cdot)$ is a mixed Gaussian distribution.

\section{Methods}\label{sec:method}
The estimation methodology consists of 5 components:
\begin{enumerate}
\item[i.] local moving averages of noisy data by a smoothing kernel $\varphi$, 
which act as proxies for $X^n_i$'s;
\item[ii.] jump truncation operated on local moving averages;
\item[iii.] spot volatility estimator $\widehat{c}^n_i$'s for estimating $c^n_i$'s;
\item[iv.] Riemann sum of $g(\widehat{c}^n_i)$'s for approximating $\int g(c_s)\ds s$;
\item[v.] bias correction due to the nonlinearity, e.g., in case of $d=1$ and constant volatility, by Taylor expansion, the estimation error of the plug-in estimator $g(\widehat{c})$ can be decomposed as 
\begin{equation*}
	g(\widehat{c}) - g(c) = \underbrace{\partial g(c)(\widehat{c} - c)}_{\text{variance}} + \underbrace{\frac{1}{2}\partial^2 g(c)(\widehat{c} - c)^2}_{\text{bias}} + \underbrace{O_p(|\widehat{c} - c|^3)}_{\text{negligible}}
\end{equation*}
the bias arises from the quadratic form of estimation error of $\widehat{c}$, provided $g$ has a non-zero Hessian. This bias term does not affect the consistency, but one needs to explicitly correct the bias to get a CLT.
\end{enumerate}
The moving-average idea is due to \cite{j09,jpv10}; the truncation is modified from (16.4.4) in \cite{jp12}; the plug-in and bias correction are inspired by \cite{jr13}. The specific recipe is given next.

\subsection{Building blocks}
For the local moving averages, we choose a smoothing kernel $\varphi$ such that
\begin{equation}\label{phi.cond}
\begin{array}{l}
	\text{supp}(\varphi)\subset(0,1),\, \int_0^1\varphi^2(s)\ds s>0  \\
	\varphi\in\mathcal{C} \text{ is piecewise } \mathcal{C}^1;\, \varphi' \text{ is piecewise Lipschitz}
\end{array}
\end{equation}
Choose an integer $l_n$ as the number of observations in each smoothing window, define $\varphi^n_h=\varphi(h/l_n)$ and $\psi_n=\sum_{h=1}^{l_n-1}(\varphi^n_h)^2$. Associate the following quantities with a generic process $U$:
\begin{equation}\label{def.Ubar.Uhat}
\begin{array}{lcl}
	\widebar{U}^n_i &=& (\psi_n)^{-1/2}\sum_{h=1}^{l_n-1}\varphi^n_h\Delta^n_{i+h-1}U \\
	\widehat{U}^n_i &=& (2\psi_n)^{-1}\sum_{h=0}^{l_n-1}(\varphi^n_{h+1}-\varphi^n_h)^2\Delta^n_{i+h}U\cdot\Delta^n_{i+h}U^\T
\end{array}
\end{equation}
$\widebar{Y}^n_i$ is a local moving average of the noisy data $Y^n_i$'s and is a proxy for $\Delta^n_iX$, $\widehat{Y}^n_i$ serves as noise correction to $\widebar{Y}^n_i$. Based on these 2 ingredients, choose $k_n>l_n$, define the spot volatility estimator as
\begin{equation}\label{def.chat}
	\widehat{c}^n_{i} \equiv \frac{1}{(k_n-l_n)\Delta_n}\sum_{h=1}^{k_n-l_n+1}\Big(\widebar{Y}^n_{i+h}\cdot\widebar{Y}^{n,\mathrm{T}}_{i+h}\1{\|\widebar{Y}^n_{i+h}\|\le\nu_n} - \widehat{Y}^n_{i+h}\Big)
\end{equation}
where $\nu_n\asymp\Delta_n^\rho$ is a truncation threshold for jumps. The choice of $\rho$ is stated in (\ref{tuning}). A spot noise variance estimator is also needed:
\begin{equation}\label{def.rhat}
	\widehat{\gamma}^n_i \equiv \frac{1}{2m_n}\sum_{h=1}^{m_n}\Delta^n_{i+h}Y\cdot\Delta^n_{i+h}Y^\T
\end{equation} 
where $m_n=\lfloor\theta'\Delta_n^{-1/2}\rfloor$, $\theta'$ positive finite. 

\subsection{The estimator}
\begin{defn}\label{def.S(g)hat}
	Let $N^n_t=\lfloor t/(k_n\Delta_n)\rfloor$, the estimator of $(\ref{def.S(g)})$ is defined as
	\begin{equation*}
		\widehat{S}(g)^n_t \equiv k_n\Delta_n\sum_{i=0}^{N^n_t-1}\left[g(\widehat{c}^n_{ik_n}) - B(g)^n_{ik_n}\right] \times a^n_t
	\end{equation*}
	where $B(g)^n_i$ is a de-biasing term of the form
	\begin{equation*}
		B(g)^n_i = \frac{1}{2k_n\Delta_n^{1/2}}\sum^d_{j,k,l,m=1}\partial^2_{jk,lm}g(\widehat{c}^n_i)\times\Xi\big(\widehat{c}^n_i,\widehat{\rr}^n_i\big)^{jk,lm}
	\end{equation*}
	$\widehat{c}^n_i$, $\widehat{\rr}^n_i$ are defined in (\ref{def.chat}), (\ref{def.rhat}), $\Xi$ is defined in (\ref{def.Xi}), and 
	$a^n_t = t/(N^n_tk_n\Delta_n)$ is a finite-sample adjustment.\footnote{Overlapping intervals are used to compute $\widehat{c}^n_i$'s, non-overlapping intervals are used to compute $\widehat{S}(g)^n_t$. The local moving averages computed over overlapping intervals in (\ref{def.chat}) are necessary to achieve the optimal convergence rate. By contrast, overlapping intervals in $\widehat{S}(g)'^n_t\equiv\Delta_n\sum_{i=0}^{\lfloor t/\Delta_n\rfloor-1}[g(\widehat{c}^n_i) - B(g)^n_i]$ do not improve the convergence rate nor efficiency, though lead to robustness in finite sample. In fact, the overlapping-interval-based estimator has the same asymptotic result as that of $\widehat{S}(g)^n_t$ in section \ref{sec:asymp}.}
\end{defn}
Besides $\varphi$, there are 3 tuning parameters in this estimator:
\begin{table}[H]
	\centering
	\begin{tabular}{c|c|r|l}
		$a\Delta_n^b$ & scale $a$ & rate $b$  & description \\
		\hline
		$l_n$   & $\theta$  & $-1/2$    & length of overlapping window for local moving averages \\
		$k_n$   & $\varrho$ & $-\kappa$ & length of disjoint window for spot volatility estimation\\
		$\nu_n$ & $\alpha$  & $\rho$   & truncation level for jumps
	\end{tabular}
\end{table}
With suitable choices of $l_n,\,k_n,\,\nu_n$ in (\ref{def.chat}), this estimator is applicable to any function $g:\mathcal{S}^+_d\mapsto\R^r$ that satisfies
\begin{equation}\label{g.cond}
g\in\mathcal{C}^3(\s)
\end{equation}
where $\s\supset\cup_m\s^\epsilon_m$ for some $\epsilon>0$, $\s^\epsilon_m=\big\{A\in\mathcal{S}^+_d: \inf_{M\in\s_m}\|A-M\|\le\epsilon\big\}$ and $\s_m$ is identified in assumption \ref{A-v}.

\subsection{Choosing tuning parameters}
A proper combination of the tuning parameters is crucial for consistency, CLT, and optimal convergence rate. For these objectives, one needs
\begin{equation}\label{tuning}
\left\{\begin{array}{rcll}
	l_n &\asymp& \theta\Delta_n^{-1/2} &\\
	k_n &\asymp& \varrho\Delta_n^{-\kappa} &\text{ where } \kappa\in\left(\frac{2}{3}\vee\frac{2+\nu}{4},\frac{3}{4}\right)\\
	\nu_n &=& \alpha\Delta_n^{\rho} &\text{ where }\rho\in \left[\frac{1}{4}+\frac{1-\kappa}{2-\nu},\frac{1}{2}\right)
\end{array}\right.
\end{equation}
$\theta,\varrho,\alpha>0$ are positive finite, and $\nu\in[0,1)$ is introduced in assumption \ref{A-v} which dictates the jump intensity. 

The rest of this section offers an intuition for (\ref{tuning}). The reader can skip this part without affecting understanding of the main result in section \ref{sec:asymp}.
\begin{enumerate}
\item \textit{$l_n$ influences the convergence rate}\\
	In the example (\ref{def.Y}), according to (\ref{def.Ubar.Uhat}),
	\[\widebar{Y}^n_i = \widebar{X}^n_i + \widebar{\e}^n_i\]
	and we can write $\widebar{\e}^n_i = -\psi_n^{-1/2}\sum_{h=0}^{l_n-1}(\varphi^n_{h+1}-\varphi^n_h)\e^n_{i+h}$.
	Under the conditional independence of $\e^n_i$'s, $\widebar{\e}^n_i=O_p(l_n^{-1})$; $\widebar{X}^n_i=O_p(\Delta_n^{-1/2})$ by (\ref{est.Xbar}). By taking $l_n\asymp\Delta_n^{-1/2}$ the orders of $\widebar{X}^n_i$ and $\widebar{\e}^n_i$ are equal, this choice of local smoothing window will deliver the optimal rate of convergence.
\item \textit{$k_n$ dictates bias-correction and the CLT form}\\
	Here let's focus on the case $d=1$, $X$ is continuous, then 
	\[\widehat{c}^n_i - c^n_i = d^n_i + s^n_i\]
	\begin{itemize}
	\item $d^n_i=\frac{1}{(k_n-l_n)\Delta_n}\int_{i\Delta_n}^{(i+k_n-l_n+1)\Delta_n}(c_s-c^n_i)\ds s$ is the ``\textit{discretization error}'', $d^n_i=O_p((k_n\Delta_n)^{1/2})$ by (\ref{classic});
	\item $s^n_i\approx\frac{1}{(k_n-l_n)\Delta_n}\Delta_n^{1/4}(\chi^n_{i+k_n-l_n+1}-\chi^n_i)$ is the ``\textit{statistical error}'',  where $\chi$ is a continuous It\^o semimartingale, this result is due to (3.8) in \cite{j09}, so $s^n_i=O_p((k_n\Delta_n^{1/2})^{-1/2})$.
	\end{itemize}
	
	Balancing the orders of $d^n_i$ and $s^n_i$ by setting $\kappa=3/4$ will result in the minimum order of total estimation error. However, in the case $\kappa\ge3/4$ the bias involves volatility of volatility and volatility jump, which are difficult to estimate and de-bias in applications. Therefore, it is advisable to choose $\kappa<3/4$, in which case the statistical error dominates in the bias, thereby the thorny terms are circumvented. Besides, to achieve successful de-biasing of statistical error and negligibility of higher-order Taylor-expansion terms, we need $\kappa>2/3$. 
	
	\begin{tikzpicture}[scale=1.15]
	\draw[->] (0,0) -- (8.0,0) node[anchor=west] {$\kappa$ for $k_n$};
	\draw	(2.0,0)  node[anchor=north] {$1/2$}
	(3.66,0) node[anchor=north] {$2/3$}
	(4.5,0)  node[anchor=north] {$3/4$}
	(7.0,0)  node[anchor=north] {$1$};
	\draw[<->] (3.66,-0.5) -- (4.5,-0.5);
	\fill[red!10]  (7.0,3.0) -- (4.5,1.5) -- (4.5,0) -- (7.0,0) -- cycle;
	\fill[blue!10] (2.0,3.0) -- (4.5,1.5) -- (4.5,0) -- (2.0,0) -- cycle;
	\draw[->] (1,0) -- (1,3.6) 
	node[anchor=south,align=center] {order of $\log(|\widehat{c}^n_i-c^n_i|)$};
	\draw[thick,dotted] (2.0,0)  -- (2.0,3.5);
	\draw[thick,dashed] (3.66,0) -- (3.66,3.5);
	\draw[thick,dashed] (4.5,0)  -- (4.5,3.5);
	\draw[thick,dotted] (7.0,0)  -- (7.0,3.5);
	\draw (7.5,3) node {\textcolor{red}{$d^n_i$}}; 
	\draw[thick,red,dashed] (2,0) -- (7,3);
	\draw[thick,<-] (6.5,1.5) -- (7.2,1.3)
	node[right,align=left] {dominated by\\ discretization\\ error};
	\draw (1.5,3) node {\textcolor{blue}{$s^n_i$}}; 
	\draw[thick,blue] (2,3) -- (7,0);
	\draw[thick,<-] (2.5,1.5) -- (1.8,1.3)
	node[left,align=right] {dominated by\\ statistical error};
	\draw[thick,<-] (4.5,1.6) -- (4.7,2)
	node[above,align=center] {error\\ minimizing\\$\kappa$};
	\end{tikzpicture}
	
	Section 3.1, 3.2 of \cite{jr13} give a similar discussion in absence of noise.
\item \textit{$\nu_n$ disentangles volatility from jump variation}\\
	$\|\widebar{Y}^n_i\|=O_p(\Delta_n^{1/2})$ if there is no jump over $[i\Delta_n,(i+l_n)\Delta_n]$, via (\ref{est.Y.bars.hats}). By choosing $\rho<1/2$, the truncation level, which is $\nu_n>\Delta_n^{1/2}$, keeps the diffusion movements and discards jumps in a certain sense. To effectively filter out the jumps, the truncation level should be bounded above and the upper bounds depends on the jump activity index $\nu$.
\end{enumerate}

\begin{remk}
If the reader is interested to estimate $(\ref{def.S(g)})$ with $g$ satisfying
\begin{equation}\label{g.cond1}
	\|\partial_h g(x)\| \le K_h(1+\|x\|^{r-h}),\,\, h=0,1,2,3,\, r\ge3
\end{equation}
the requirements on $k_n$ and $\nu_n$ can be loosened and become
\begin{equation}\label{tuning1}
\begin{array}{rcl}
	k_n\Delta_n^\kappa &\asymp& \varrho,\, \text{ where } \kappa\in\left(\frac{2}{3},\frac{3}{4}\right)\\
	\nu_n             &=&   \alpha\Delta_n^{\rho},\, \text{ where } \rho\in \left[\frac{1}{4}+\frac{1}{4(2-\nu)},\frac{1}{2}\right)
\end{array}
\end{equation}
For wider applicability, we choose to accommodate the functional space (\ref{g.cond}) and retain the requirement (\ref{tuning}).
\end{remk}

\section{Asymptotics}\label{sec:asymp}
\subsection{Elements}
Before stating the asymptotic result, some elements appearing in the limit need to be defined. Associate the following quantities with the smoothing kernel $\varphi$ for $l,m=0,1$:
\begin{equation}\label{phi.vars}
\begin{array}{ll}
	\phi_0(s)=\int_s^1\varphi(u)\varphi(u-s)\ds u, & \phi_1(s)=\int_s^1\varphi'(u)\varphi'(u-s)\ds u \\
	\Phi_{lm}=\int_0^1\phi_l(s)\phi_m(s)\ds s, & \Psi_{lm}=\int_0^1s\,\phi_l(s)\phi_m(s)\ds s
\end{array}
\end{equation}

Define $\Sigma$, $\Theta$, $\Upsilon$ as $\R^{d\times d\times d\times d}$-valued functions, such that for $x,z\in\R^{d\times d}$, $j,k,l,m=1,\cdots,d$,
\begin{equation}\label{tensors}
\begin{array}{lcl}
	\Sigma(x)^{jk,lm}   &=& x^{jl}x^{km} + x^{jm}x^{kl}\\
	\Theta(x,z)^{jk,lm} &=& x^{jl}z^{km} + x^{jm}z^{kl} + x^{km}z^{jl} + x^{kl}z^{jm}\\
\end{array}
\end{equation}
and $\Xi$ also as a tensor-valued function
\begin{equation}\label{def.Xi}
	\Xi(x,z) = \frac{2\theta}{\phi_0(0)^2}\left[\Phi_{00}\Sigma(x) + \frac{\Phi_{01}}{\theta^2}\Theta(x,z) + \frac{\Phi_{11}}{\theta^4}\Sigma(z) \right]
\end{equation}
where $\theta$ is introduced in (\ref{tuning}).

Now we are ready to describe the limit process.
\begin{defn}\label{def.Z(g)}
Given $g$ satisfying (\ref{g.cond}) or (\ref{g.cond1}), $Z(g)$ is a process defined on an extension of the probability space  $(\Omega,\F,(\F_t),\mathbb{P})$ specified in (\ref{prob.sp}),
such that conditioning on $\mathcal{F}$,  $Z(g)$ is a mean-0 continuous It\^o semimartingale with conditional variance
\begin{equation*}
	\widetilde{E}[Z(g)Z(g)^\T|\mathcal{F}]=V(g)
\end{equation*}
where $\widetilde{E}$ is the conditional expectation operator on the extended probability space and
\begin{equation}\label{AVAR}
	V(g)_t = \int_0^t \sum_{j,k,l,m=1}^{d}\partial_{jk}g(c_s)\,\partial_{lm}g(c_s)^\T\,\Xi(c_s,\gamma_s)^{jk,lm}\ds s
\end{equation}
with $\partial_{jk}g(c)$ being the first-order partial derivative of $g$ with respect to $c^{jk}$, $\rr$ being the variance process of noise defined in (\ref{def.r}).
\end{defn}

\subsection{The formal results}
\begin{thm}\label{CLT}
Assume assumptions \ref{A-v}, \ref{A-r}. Given $g$ satisfying (\ref{g.cond}), we control the tuning parameters $l_n$, $k_n$, $\nu_n$ according to (\ref{tuning}), then we have the following stable convergence in law of discretized process to a conditional continuous It\^o semimartingale on compact subsets of $\R^+$:
\begin{equation}\label{clt1}
	\Delta_n^{-1/4}\left[\widehat{S}(g)^n-S(g)\right]\overset{\mathcal{L}-s(f)}{\longrightarrow}Z(g)
\end{equation}
where $S(g)$ is defined in (\ref{def.S(g)}), $\widehat{S}(g)^n$ is from definition \ref{def.S(g)hat}, $Z(g)$ is identified in definition \ref{def.Z(g)}.
\end{thm}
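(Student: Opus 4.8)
The plan is to follow the route developed for volatility functionals in the no-noise case by \cite{jr13}, adapted to the pre-averaged and truncated statistics, and to import the dependence-handling device of \cite{j09}. As a first reduction, since $c$, $c^{-1}$, the drift and the noise variance $\rr$ are only locally bounded, I pass by the usual localization argument to the case where all of these (and the coefficients of the It\^o semimartingale driving $c$) are bounded by a fixed constant and the compensator of $J$ is finite. To accommodate the functional class (\ref{g.cond}), whose derivatives may blow up near $\partial\s^+_d$, I combine this with the spatial localization of \cite{ltt17}: introduce stopping times at which the spot volatility leaves a fixed compact neighborhood of $\s_m$, show they tend to infinity, and on that neighborhood replace $g$ by a globally $\mathcal{C}^3$ function with bounded derivatives; stable convergence is preserved under all these reductions.

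Next I analyze the spot estimator. Uniformly in $i$ I establish the decomposition $\widehat{c}^n_i - c^n_i = d^n_i + s^n_i + \mathrm{R}^n_i$, where $d^n_i$ is the discretization error, $s^n_i$ the statistical error carrying the martingale-type representation of \cite{j09}, and $\mathrm{R}^n_i$ collects the jump/truncation and noise-correction remainders, together with the moment bound $\E\big[\|\widehat{c}^n_i - c^n_i\|^p\big]^{1/p}\le C\big((k_n\Delta_n^{1/2})^{-1/2}+(k_n\Delta_n)^{1/2}\big)$. The facts needed are: $d^n_i = O_p((k_n\Delta_n)^{1/2})=o_p(\Delta_n^{1/4})$, which is exactly why $\kappa<3/4$; $\mathrm{R}^n_i = o_p(\Delta_n^{1/4})$ (in a suitable summed sense), which is where the finite variation of $J$, the activity index $\nu$ and the lower bound on $\rho$ in (\ref{tuning}) enter; and the above moment bound, which together with $\kappa>2/3$ makes every cubic and higher Taylor remainder negligible after summation.

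I then Taylor-expand $g(\widehat{c}^n_{ik_n})$ to second order about $c^n_{ik_n}$. The zeroth-order terms give $k_n\Delta_n\sum_i g(c^n_{ik_n})\to S(g)_t$ by Riemann-sum convergence, with $a^n_t$ absorbing the boundary defect. For the second-order terms I compute the $\F^n_{ik_n}$-conditional expectation of the quadratic form $(\widehat{c}^n-c^n)^{jk}(\widehat{c}^n-c^n)^{lm}$ using the kernel identities (\ref{phi.vars}) and the expansions of $\widebar Y$ and $\widehat Y$ in terms of Brownian and noise increments; this equals $\frac{1}{k_n\Delta_n^{1/2}}\Xi(c^n,\rr^n)^{jk,lm}$ up to negligible order, and it is precisely cancelled by $B(g)^n_{ik_n}$ once $\widehat{c}^n,\widehat{\rr}^n$ are replaced by $c^n,\rr^n$ (justified by consistency and continuity of $\partial^2 g$ and $\Xi$), while the centered second-order part is $o_p(\Delta_n^{1/4})$ after summation. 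Hence $\Delta_n^{-1/4}[\widehat S(g)^n_t - S(g)_t]$ reduces, up to $o_p(1)$, to the first-order term $\Delta_n^{-1/4}k_n\Delta_n\sum_i\sum_{j,k}\partial_{jk}g(c^n_{ik_n})(\widehat{c}^n_{ik_n}-c^n_{ik_n})^{jk}$ and, by the previous step, to the same expression with $\widehat{c}^n-c^n$ replaced by its statistical-error part $s^n$.

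Finally I prove a stable CLT for this leading term. After substituting the representation of $s^n$, it is a triangular array of functionals of Brownian and noise increments over \emph{overlapping} pre-averaging windows, so the summands are strongly dependent and no direct martingale CLT applies; I use the big-block/small-block scheme of \cite{j09}, splitting the $k_n$-blocks into alternating big and small blocks, showing the small-block contributions are $o_p(1)$, and approximating each big-block contribution by an $\F^n$-martingale difference. I then check the martingale stable CLT conditions: the sum of conditional variances converges to $V(g)_t$ of (\ref{AVAR}) (the same $\Phi_{lm}$ entering through $\Xi$), a Lindeberg condition follows from the moment bound above, and the array is asymptotically orthogonal to $W$ and to every bounded martingale orthogonal to $W$, which upgrades the convergence to $\F$-stable convergence toward the $\F$-conditionally Gaussian process $Z(g)$; performing this on a finite time grid together with a tightness estimate yields the functional statement (\ref{clt1}). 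I expect the main obstacle to be the last two steps jointly: controlling the martingale approximation of the statistical error uniformly while keeping the jump-truncation remainders below the $\Delta_n^{1/4}$ threshold — the admissible interval for $\rho$ in (\ref{tuning}) is dictated by exactly this balance — and organizing the big-block/small-block argument so that the correlations created by overlapping windows are reproduced exactly by the tensor $\Xi$ in the limit.
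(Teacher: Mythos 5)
Your plan follows the same route as the paper: after localizing probabilistically (assumptions to bounded coefficients) and spatially (the uniform bound $\sup_i\|\widehat c^n_i-\widebar c^n_i\|=o_p(1)$ plus convexity, so $g$ can be taken $\mathcal{C}^3$ with bounded derivatives), the paper separates the jump/truncation remainder via $\widehat c^{*n}_i$, decomposes $\widehat c^{*n}_i-c^n_i$ into discretization, noise-correction, small-block and big-block components, Taylor-expands $g$ to second order with the conditional quadratic form cancelled by $B(g)^n$, and applies the big-block/small-block scheme with Jacod–Shiryaev IX.7.28 (including orthogonality to $W$ and to bounded orthogonal martingales) to obtain the limit $Z(p)$, then sends $p\to\infty$. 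Your proposal identifies all of these ingredients and the same balance of tuning parameters, so it is a correct sketch of essentially the paper's argument.
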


Theorem \ref{CLT} is valid over the functional space (\ref{g.cond}), which is as general as the current literature can get. If applications require functionals whose derivatives satisfy the polynomial growth condition (\ref{g.cond1}), we can put less restrictions on the tuning parameters.
\begin{thm}\label{CLT.restricted}
Assume assumptions \ref{A-v}, \ref{A-r}. Replace the functional space (\ref{g.cond}) with (\ref{g.cond1}), replace the tuning conditions (\ref{tuning}) on $k_n$, $\nu_n$ with (\ref{tuning1}), then (\ref{clt1}) still holds true.
\end{thm}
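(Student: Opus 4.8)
The plan is to follow the proof of Theorem~\ref{CLT} essentially line by line and to check that, once the derivatives of $g$ obey the global bounds (\ref{g.cond1}), every error term is already negligible under the weaker tuning (\ref{tuning1}). The first step is the usual localization: Assumptions~\ref{A-v} and~\ref{A-r} allow a sequence of stopping times reducing the problem to the regime in which $b$, $c$, $c^{-1}$, the noise variance $\rr$ and all auxiliary coefficients are uniformly bounded and $J$ has uniformly bounded variation, so it suffices to prove (\ref{clt1}) there. The decisive difference from Theorem~\ref{CLT} is that under (\ref{g.cond1}) this ordinary localization need \emph{not} be supplemented by the spatial-localization device of \cite{ltt17}: since $\|\partial_h g(x)\|\le K_h(1+\|x\|^{r-h})$ and $\widehat{c}^n_i$ has finite moments of every order by (\ref{def.chat}), all Taylor remainders below are dominated by polynomial envelopes with finite moments, and it is precisely the parts of (\ref{tuning}) dictated by controlling the excursions of $\widehat{c}^n_i$ outside a compact neighbourhood of the range of $c$ — the extra lower bound $\kappa>\tfrac{2+\nu}{4}$ and the larger lower bound on $\rho$ — that may be dropped.

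Second, I would Taylor-expand $g(\widehat{c}^n_{ik_n})$ about the spot value $c^n_{ik_n}$,
\[ g(\widehat{c}^n_{ik_n}) = g(c^n_{ik_n}) + \partial g(c^n_{ik_n})\big[\widehat{c}^n_{ik_n}-c^n_{ik_n}\big] + \tfrac12\,\partial^2 g(c^n_{ik_n})\big[(\widehat{c}^n_{ik_n}-c^n_{ik_n})^{\otimes 2}\big] + R^n_{ik_n}, \]
multiply by $k_n\Delta_n$, sum over $i<N^n_t$, and absorb the finite-sample adjustment $a^n_t=1+O(\Delta_n^{1-\kappa})$ into the error. The term in $g(c^n_{ik_n})$ yields a block Riemann sum whose distance to $S(g)_t$ is $O_p(k_n\Delta_n)$ by a stochastic-Fubini argument (each block contributes a conditionally mean-zero term of order $(k_n\Delta_n)^{3/2}$ against $N^n_t\asymp(k_n\Delta_n)^{-1}$ blocks), hence $o_p(\Delta_n^{1/4})$ because $\kappa<3/4$. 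Splitting $\widehat{c}^n_{ik_n}-c^n_{ik_n}$ into statistical error plus discretization error as in Section~\ref{sec:method}, the linear term produces, after the big-block--small-block construction of \cite{j09} needed because of the overlapping smoothing windows ($l_n\asymp\theta\Delta_n^{-1/2}$ is unchanged), the dominant martingale whose stable limit is the conditional continuous It\^o semimartingale $Z(g)$ of Definition~\ref{def.Z(g)} with conditional covariance $V(g)$ of (\ref{AVAR}); this step is identical to that of Theorem~\ref{CLT} once $\partial g(c^n_{ik_n})$ is bounded, which it is after localization. The quadratic term carries the bias: conditionally on $\F$, the leading part of $\widetilde{E}\big[(\widehat{c}^n_{ik_n}-c^n_{ik_n})^{jk}(\widehat{c}^n_{ik_n}-c^n_{ik_n})^{lm}\,|\,\F\big]$ is $\tfrac{\Delta_n^{1/2}}{k_n}\,\Xi(c^n_{ik_n},\rr^n_{ik_n})^{jk,lm}$, so $k_n\Delta_n\cdot\tfrac12\sum_i\partial^2 g(c^n_{ik_n})[\cdots]$ matches $k_n\Delta_n\sum_i B(g)^n_{ik_n}$ up to $o_p(\Delta_n^{1/4})$ after replacing $\widehat{c}^n_{ik_n},\widehat{\rr}^n_{ik_n}$ by $c^n_{ik_n},\rr^n_{ik_n}$ in $\Xi$ and $\partial^2 g$ (a one-line moment estimate under (\ref{g.cond1}), whereas (\ref{g.cond}) needs the uniform convergence result alluded to in Section~\ref{sec:intro}), while the part of the quadratic term centred at its conditional mean is $o_p(\Delta_n^{1/4})$ since $\kappa>1/2$. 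Finally $R^n_{ik_n}=O_p(\|\widehat{c}^n_{ik_n}-c^n_{ik_n}\|^3)$ conditionally, so $k_n\Delta_n\sum_i R^n_{ik_n}=O_p\big((k_n\Delta_n^{1/2})^{-3/2}\big)=o_p(\Delta_n^{1/4})$ exactly when $\kappa>2/3$, and the polynomial-growth bound on $\partial^3 g$ keeps the implied constant finite without any spatial truncation.

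Third, the effect of truncation and jumps is re-examined in the enlarged $\rho$-range of (\ref{tuning1}): one replaces $Y$ by the process obtained by removing the jumps of $X$, and shows that the truncated cross-products $\widebar{Y}^n_i\widebar{Y}^{n,\T}_i\1{\|\widebar{Y}^n_i\|\le\nu_n}$ differ from their jump-free counterparts by a quantity that, after being run through $g$, $\partial g$, $\partial^2 g$ (all controlled by polynomial envelopes) and summed over the $N^n_t$ outer blocks, is $o_p(\Delta_n^{1/4})$. This reduces to the estimate adapted from (16.4.4) of \cite{jp12}, tracking how many pre-averaging windows a jump of given size can contaminate under the activity index $\nu$ of Assumption~\ref{A-v}; the bookkeeping shows that $\rho\ge\tfrac14+\tfrac1{4(2-\nu)}$ suffices here, in contrast with the stricter $\rho\ge\tfrac14+\tfrac{1-\kappa}{2-\nu}$ (and $\kappa>\tfrac{2+\nu}{4}$) forced by (\ref{g.cond}), because the latter must absorb suprema of the derivatives of $g$ over the random region swept by $\widehat{c}^n_i$ whereas the former only uses crude $L^p$ bounds on $\|\widebar{Y}^n_i\|$.

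I expect this last, jump-truncation step to be the main obstacle: it requires carefully redoing the truncation estimates with the wider window for $\rho$, verifying that the interplay among $\nu$, $\kappa$ and $\rho$ still makes the contribution of truncated jumps $o_p(\Delta_n^{1/4})$ after summing over $N^n_t\asymp\Delta_n^{\kappa-1}$ blocks and after propagating through the nonlinear $g$ and through the de-biasing term $B(g)^n_i$, which itself contains $\widehat{c}^n_i$ and $\widehat{\rr}^n_i$. Everything else — the localization, the Taylor bookkeeping, the big-block--small-block CLT and the identification of the limiting covariance $V(g)$ — is either identical to, or a strict simplification of, the corresponding steps in the proof of Theorem~\ref{CLT}, so Theorem~\ref{CLT.restricted} follows.
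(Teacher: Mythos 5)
Your proposal correctly identifies the single substantive change relative to Theorem~\ref{CLT}: under the polynomial‑growth condition (\ref{g.cond1}) the spatial‑localization step — the paper's uniform bound (\ref{chat-c.uniform}) that forces $\widehat c^n_i$ into $\mathcal S^\epsilon$ so that (\ref{g.cond.strong}) holds — is no longer needed, and it is precisely that step which generates the extra requirements $\kappa>\tfrac{2+\nu}{4}$ and $\rho\ge\tfrac14+\tfrac{1-\kappa}{2-\nu}$ in (\ref{tuning}). With derivatives controlled by polynomial envelopes, the $\widebar S^{n,1}$ term is handled by the $L^1$ bound of Lemma~\ref{est.chat-chat*} alone, for which $\rho\ge\tfrac14+\tfrac{1}{4(2-\nu)}=\tfrac{3-\nu}{4(2-\nu)}$ suffices (cf.\ the remark preceding Lemma~\ref{lemma.v1}), and the remaining terms $\widebar S^{n,0},\dots,\widebar S(p)^{n,4}$ go through unchanged; this is the route the paper implicitly intends. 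One small caveat: your phrase ``$\widehat c^n_i$ has finite moments of every order'' overstates Lemma~\ref{est.chat-chat*}, whose exponent $\tfrac12-(\rho-\tfrac14)\nu-(1-2\rho)q$ turns negative for large $q$; what is actually used is that the finitely many moments dictated by the exponent $r$ in (\ref{g.cond1}) are uniformly bounded, which the tuning (\ref{tuning1}) does guarantee. With that correction the argument is sound and coincides with the paper's.
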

However, theorem \ref{CLT.restricted} rules out operations that involve matrix inversion, it is not applicable to, for instance, inference of linear regression models. In the rest of this paper, we focus on the results over the general functional space (\ref{g.cond}).

The asymptotic result is stated with a probabilistic flavor, which is necessary to express the strongest convergence\footnote{It is functional stable convergence (or stable convergence of processes) in law.} by appendix \ref{apdx:derivation}. There is an alternative formulation which is more relevant for statistical applications:
\begin{equation}\label{clt2}
	n^{1/4}\left[\widehat{S}(g)^n_t-S(g)_t\right]\overset{\mathcal{L}-s}{\longrightarrow}\mathcal{MN}\big(0,\sqrt{t}V(g)_t\big)
\end{equation}
this is true under the same conditions and $t$ is finite.

\subsection{Confidence intervals}
The asymptotic variance in (\ref{clt2}) can be estimated by plugging in spot estimates of volatility (\ref{def.chat}) and noise covariance matrix (\ref{def.rhat}):
\begin{equation}\label{def.V(g)hat}
	\widehat{V}(g)^n_t \equiv k_n\Delta_n\sum_{i=0}^{N^n_t-1}\sum_{j,k,l,m=1}^{d}\partial_{jk}g(\widehat{c}^n_{ik_n})\,\partial_{lm}g(\widehat{c}^n_{ik_n})^\T\,\Xi(\widehat{c}^n_{ik_n},\widehat{\gamma}^n_{ik_n})^{jk,lm}
\end{equation}

\begin{prop}\label{AVAR.est}
$\widehat{V}(g)^n_t$ is consistent under (\ref{tuning}) and assumptions \ref{A-v}, \ref{A-r}. Specifically, for all finite $t$,
\begin{equation*}
	\big\|\widehat{V}(g)^n_t - V(g)_t\big\| = O_p(\Delta_n^{\kappa-1/2})
\end{equation*}
where $\kappa$ is specified in (\ref{tuning}).
\end{prop}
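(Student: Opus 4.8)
The plan is to compare $\widehat V(g)^n_t$ with the oracle Riemann sum built from the true spot values,
\[
\widetilde V(g)^n_t \equiv k_n\Delta_n\sum_{i=0}^{N^n_t-1}\sum_{j,k,l,m=1}^{d}\partial_{jk}g(c^n_{ik_n})\,\partial_{lm}g(c^n_{ik_n})^\T\,\Xi(c^n_{ik_n},\gamma^n_{ik_n})^{jk,lm},
\]
and to write $\widehat V(g)^n_t-V(g)_t=\big[\widehat V(g)^n_t-\widetilde V(g)^n_t\big]+\big[\widetilde V(g)^n_t-V(g)_t\big]$. Throughout, set $F(x,z)\equiv\sum_{j,k,l,m=1}^{d}\partial_{jk}g(x)\,\partial_{lm}g(x)^\T\,\Xi(x,z)^{jk,lm}$ and $F^n_i\equiv F(c^n_i,\gamma^n_i)$. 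Since $g\in\mathcal{C}^3(\s)$ with $\s$ an $\epsilon$-neighbourhood of the (a.s.\ compact) range of $c$, a spatial localization in the spirit of \cite{ltt17}, together with the uniform consistency $\max_i\|\widehat c^n_{ik_n}-c^n_{ik_n}\|\to0$ and the jump-truncation controls already used for Theorem \ref{CLT}, lets us assume that $F$ is $\mathcal{C}^2$ in $x$ with bounded derivatives on the relevant set and that the indicator in (\ref{def.chat}) is inactive on the no-jump blocks.

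\emph{Oracle versus integral.} Because $c$ and $\gamma$ are It\^o semimartingales (Assumptions \ref{A-v}, \ref{A-r}), so is $s\mapsto F(c_s,\gamma_s)$, and the block-wise Riemann-sum error is controlled in the standard way: over a block of length $k_n\Delta_n$ the quadratic remainder and the drift part of the first-order term each integrate to $O_p((k_n\Delta_n)^2)$, while the martingale part of the first-order term is a martingale-difference array (w.r.t.\ $(\mathcal{F}_{ik_n\Delta_n})_i$) with per-block $L^2$-norm $O_p((k_n\Delta_n)^{3/2})$; summing the $N^n_t\asymp(k_n\Delta_n)^{-1}$ blocks and adding the incomplete terminal block yields $\|\widetilde V(g)^n_t-V(g)_t\|=O_p(k_n\Delta_n)=O_p(\Delta_n^{1-\kappa})$. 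As $\kappa<3/4$ gives $1-\kappa>\tfrac14>\kappa-\tfrac12$, this term is $o_p(\Delta_n^{\kappa-1/2})$ and is absorbed into the stated rate.

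\emph{The plug-in error.} Taylor-expanding $F$ at $(c^n_{ik_n},\gamma^n_{ik_n})$ to second order in the first argument,
\[
F(\widehat c^n_{ik_n},\widehat\gamma^n_{ik_n})-F^n_{ik_n}
=\partial_xF^n_{ik_n}\big[\widehat c^n_{ik_n}-c^n_{ik_n}\big]
+\partial_zF^n_{ik_n}\big[\widehat\gamma^n_{ik_n}-\gamma^n_{ik_n}\big]
+\tfrac12\,\partial^2_{xx}F^n_{ik_n}\big[(\widehat c^n_{ik_n}-c^n_{ik_n})^{\otimes 2}\big]+R^n_{ik_n},
\]
where $R^n_{ik_n}$ gathers the $z$-quadratic, cross, and higher-order remainders and is $o_p(|\widehat c^n_{ik_n}-c^n_{ik_n}|^2)+O_p(\Delta_n^{1/2})$ uniformly in $i$. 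I then invoke the estimates built en route to Theorem \ref{CLT}: (i) $\widehat c^n_i-c^n_i=d^n_i+s^n_i$ with discretization error $d^n_i=O_p((k_n\Delta_n)^{1/2})$ and statistical error $s^n_i=O_p((k_n\Delta_n^{1/2})^{-1/2})$, both with $\mathcal{F}$-conditionally centered leading parts and, across the indices $ik_n$, built from essentially disjoint data windows; (ii) $\widehat\gamma^n_i-\gamma^n_i=O_p(\Delta_n^{1/4})$ with an $\mathcal{F}$-conditionally centered leading part on windows of length $m_n\asymp\Delta_n^{-1/2}$ disjoint across $ik_n$; and, crucially, (iii) $\E\big[(\widehat c^n_i-c^n_i)^{\otimes 2}\mid\mathcal{F}\big]=(k_n\Delta_n^{1/2})^{-1}\Xi(c^n_i,\gamma^n_i)+o_p(\Delta_n^{\kappa-1/2})$, the identity underlying the de-biasing term $B(g)$ of Definition \ref{def.S(g)hat}. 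Multiplying by $k_n\Delta_n$ and summing over $i$: the two first-order terms and the centered part of the quadratic term are sums of $N^n_t$ conditionally martingale/independence-structured centered summands, and a second-moment bound gives them the orders $O_p(\Delta_n^{1/4})$, $O_p(\Delta_n^{(3-2\kappa)/4})$ and $O_p(\Delta_n^{\kappa/2})$ respectively, each $o_p(\Delta_n^{\kappa-1/2})$ for $\kappa\in(\tfrac23,\tfrac34)$; the remainder contributes $o_p(1)\cdot O_p\!\big(k_n\Delta_n\sum_i\|\widehat c^n_{ik_n}-c^n_{ik_n}\|^2\big)=o_p(\Delta_n^{\kappa-1/2})$ by (iii) again. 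What survives is the conditional-mean part of the quadratic term,
\[
k_n\Delta_n\sum_{i=0}^{N^n_t-1}\tfrac12\,\partial^2_{xx}F^n_{ik_n}\Big[(k_n\Delta_n^{1/2})^{-1}\Xi(c^n_{ik_n},\gamma^n_{ik_n})\Big]
=\frac{\Delta_n^{1/2}}{2}\sum_{i=0}^{N^n_t-1}\partial^2_{xx}F^n_{ik_n}\big[\Xi(c^n_{ik_n},\gamma^n_{ik_n})\big],
\]
and since the last sum is $\asymp N^n_t\asymp\Delta_n^{\kappa-1}$ (a Riemann sum of $(k_n\Delta_n)^{-1}\int_0^t\partial^2_{xx}F(c_s,\gamma_s)[\Xi(c_s,\gamma_s)]\,\ds s$), this is $O_p(\Delta_n^{\kappa-1/2})$, which is exactly the asserted rate.

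\emph{Main obstacle.} The delicate point is item (iii): pinning down $\E[(\widehat c^n_i-c^n_i)^{\otimes 2}\mid\mathcal{F}]$ to $o_p(\Delta_n^{\kappa-1/2})$ accuracy, and the conditional near-independence and centering of the leading parts of the $\widehat c^n_{ik_n}$'s, in the simultaneous presence of overlapping pre-averaging windows inside each block, jump truncation, and the time-variation of $b$, $c$, $\gamma$. These ingredients are, however, exactly what the proof of Theorem \ref{CLT} already assembles; the only genuinely new bookkeeping is to verify that each error channel lands strictly below $\Delta_n^{\kappa-1/2}$ over the whole admissible range of $\kappa$, which reduces to the elementary comparisons $1-\kappa>\tfrac14>\kappa-\tfrac12$ and $\kappa/2<\tfrac12$ valid for $\kappa<3/4$.
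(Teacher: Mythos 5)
Your proof is correct and takes essentially the same route as the paper's: both reduce the stated rate to the systematic bias in the quadratic term of the Taylor expansion of the plug-in, which by Lemma \ref{est.beta} (together with (\ref{est.chi}) for the noise-variance plug-in) equals $(k_n\Delta_n^{1/2})^{-1}\Xi(c^n_i,\gamma^n_i)$ to leading order, giving $O_p(\Delta_n^{\kappa-1/2})$, while the Riemann discretization and centered first-order terms are of smaller order since $1-\kappa>\tfrac14>\kappa-\tfrac12$ for $\kappa<\tfrac34$. The paper's own proof is a terse appeal to Lemma \ref{est.beta} and (\ref{est.chi}); your write-up expands that same appeal into explicit bookkeeping.
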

\begin{proof}
The asymptotic variance (\ref{AVAR}) is a smooth functional of spot volatility and instantaneous noise covariance, so the consistency of (\ref{def.V(g)hat}) follows from the consistence of the spot volatility estimator (\ref{def.chat}) and the noise covariance estimator (\ref{def.rhat}). According to lemma \ref{est.beta} and (\ref{est.chi}), the error rate of (\ref{def.V(g)hat}) is determined by the estimation error of spot volatility. Therefore, the error rate of (\ref{def.V(g)hat}) is the same as the error rate of the volatility functional estimator without bias correction, which is $(k_n\Delta_n^{1/2})^{-1}$, then the proposition follows from (\ref{tuning}).
\end{proof}

Based on theorem \ref{CLT}, proposition \ref{AVAR.est} and the property of stable convergence, we have the following feasible central limit theorem:
\begin{corol}
Under (\ref{tuning}) and assumptions \ref{A-v}, \ref{A-r}, we have
\begin{equation}\label{clt3}
	\big[\Delta_n^{1/2}\,\widehat{V}(g)^n_t\big]^{-1/2}\left[\widehat{S}(g)^n_t-S(g)_t\right]\overset{\mathcal{L}}{\longrightarrow}\mathcal{N}\big(0,\mathbb{I}\big)
\end{equation}
in restriction to the event $\{\omega\in\Omega, \widehat{V}(g)^n_t \text{ is positive definite}\}$, where $\Omega$ is defined in (\ref{prob.sp}).
\end{corol}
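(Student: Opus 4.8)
The plan is to deduce the feasible CLT from the stable central limit theorem (in its fixed-$t$ form (\ref{clt2})), the consistency of $\widehat{V}(g)^n_t$ in Proposition~\ref{AVAR.est}, and the stability properties of stable convergence in law — in particular, that stable convergence of $X_n$ is preserved under joint pairing with a sequence converging in probability to an $\mathcal{F}$-measurable limit, and hence under continuous transformations of the pair (see Section~2.2 of \cite{jp12}).

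First I would do the rate bookkeeping. Since $n=\lfloor t/\Delta_n\rfloor$, we have $(n\Delta_n)^{1/4}\to t^{1/4}$, so (\ref{clt2}) yields
\[
	\Delta_n^{-1/4}\big[\widehat{S}(g)^n_t-S(g)_t\big]=\frac{1}{(n\Delta_n)^{1/4}}\;n^{1/4}\big[\widehat{S}(g)^n_t-S(g)_t\big]\overset{\mathcal{L}-s}{\longrightarrow}\zeta,
\]
where, conditionally on $\mathcal{F}$, $\zeta\sim\mathcal{MN}\big(0,V(g)_t\big)$; the scalar factor $t^{-1/4}$ rescales the conditional covariance $\sqrt{t}\,V(g)_t$ from (\ref{clt2}) back to $V(g)_t$ (equivalently one may project (\ref{clt1}) at $t$, using a.s.\ continuity of $Z(g)$ and Definition~\ref{def.Z(g)}). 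Next, Proposition~\ref{AVAR.est} gives $\widehat{V}(g)^n_t\overset{\mathbb{P}}{\longrightarrow}V(g)_t$, with $V(g)_t$ being $\mathcal{F}$-measurable; on the event $\{V(g)_t\text{ positive definite}\}$ the map $A\mapsto A^{-1/2}$ is continuous, so $[\widehat{V}(g)^n_t]^{-1/2}\overset{\mathbb{P}}{\longrightarrow}[V(g)_t]^{-1/2}$ there, and for $n$ large $\widehat{V}(g)^n_t$ is itself positive definite on that event, which matches the restriction in the statement.

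Then I would combine the two displays. Writing $[\Delta_n^{1/2}\widehat{V}(g)^n_t]^{-1/2}=[\widehat{V}(g)^n_t]^{-1/2}\,\Delta_n^{-1/4}$, the target quantity equals $[\widehat{V}(g)^n_t]^{-1/2}\cdot\Delta_n^{-1/4}[\widehat{S}(g)^n_t-S(g)_t]$. By the Slutsky property for stable convergence, $\big([\widehat{V}(g)^n_t]^{-1/2},\,\Delta_n^{-1/4}[\widehat{S}(g)^n_t-S(g)_t]\big)\overset{\mathcal{L}-s}{\longrightarrow}\big([V(g)_t]^{-1/2},\,\zeta\big)$ jointly, hence the product converges $\mathcal{L}-s$ to $[V(g)_t]^{-1/2}\zeta$. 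Conditionally on $\mathcal{F}$, $[V(g)_t]^{-1/2}\zeta\sim\mathcal{N}\big(0,[V(g)_t]^{-1/2}V(g)_t[V(g)_t]^{-1/2}\big)=\mathcal{N}(0,\mathbb{I})$; since this conditional law does not depend on $\mathcal{F}$, the limit is independent of $\mathcal{F}$ and the stable convergence is ordinary convergence in law to $\mathcal{N}(0,\mathbb{I})$, as claimed.

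The argument is essentially routine; the only points needing care are (i) invoking the correct form of the Slutsky/stability property together with continuity of the matrix inverse square root on the positive-definite cone, and (ii) the measure-theoretic handling of the restriction to $\{\widehat{V}(g)^n_t\text{ positive definite}\}$: via Proposition~\ref{AVAR.est} one identifies it, eventually in $n$ and up to a null set, with the $\mathcal{F}$-measurable event $\{V(g)_t\text{ positive definite}\}$, and restricts the convergence statement to that event. If $V(g)_t$ is singular with positive probability, the conclusion is understood only in restriction to that event, which is precisely why the corollary is phrased with the restriction. This last bookkeeping with the random, $n$-dependent event is the main (and only mild) obstacle.
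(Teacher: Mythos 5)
Your argument is correct and is precisely the route the paper intends, which simply invokes Theorem~\ref{CLT} (or its fixed-$t$ form~(\ref{clt2})), Proposition~\ref{AVAR.est}, and the Slutsky-type stability of stable convergence to self-normalize the mixed Gaussian limit into a standard normal. The rate bookkeeping $(n\Delta_n)^{1/4}\to t^{1/4}$ matching $\sqrt{t}\,V(g)_t$ back to $V(g)_t$ is correct, and your identification of the $n$-dependent restriction event with $\{V(g)_t\text{ positive definite}\}$ eventually in $n$ is the right way to read the corollary's restriction.
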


\section{Applications}\label{sec:appl}
\subsection{Quarticity estimation}
In the univariate setting, the so-called quarticity $\int_0^tc_s^2\ds s$ appears in the asymptotic variances of many extant volatility estimators. The multivariate counterpart involves $\int_0^t c_s^{jl}c_s^{km}+c_s^{jm}c_s^{kl}\ds s$, e.g., $\Xi(c_s,\rr_s)^{jk,lm}$ in (\ref{AVAR}). Since the quarticity is an integrated functional of volatility, the volatility functional estimator facilitates uncertainty quantification for various volatility estimators.

\subsection{Realized Laplace transform}
\cite{tt12} put forward an estimator of the realized Laplace transform of volatility defined as
	\[\int_0^t e^{iwc_s}\ds s.\]
This transform can be viewed as the characteristic function of volatility under the occupation measure. By matching the the moments of realized Laplace transform with those induced by a model, we can estimate model parameter(s) or test the model. An open question noted by \cite{tt12} is the estimation of realized Laplace transform using noisy data. By the nonparametric estimation of volatility path in the first stage and the bias-corrected Riemann summation of functional plug-ins in the second stage, this paper contributes a rate-optimal solution to the open question.

\subsection{Generalized method of moments (GMM)}
\cite{lx16} proposed the generalized method of integrated moments for financial high-frequency data. In estimating an option pricing model, one observes the process $Z_t = (t, X_t, r_t, d_t)$ where $X_t$ is the price of the underlying observed without any noise, $r_t$ is the short-term interest rate, $d_t$ is the dividend yield. One model of the arbitrage-free option price under the risk-neutral probability measure is
\begin{equation*}
	\beta_t = f(Z_t,c_t;\theta^*)
\end{equation*}
where $f$ is deterministic, $\theta^*$ is the true model parameter. The observed option price is often modeled as
\begin{equation*}
	Y_{i\Delta_n} = \beta_{i\Delta_n} + \epsilon_i
\end{equation*}
where $\epsilon_i$ is pricing error and $\E(\epsilon_i)=0$. Let $g(Z_t,c_t;\theta) = \E[Y_t - f(Z_t,c_t;\theta)]$, then we have the following integrated moment condition:
\begin{equation*}
	G(\theta^*) = 0
\end{equation*}
where $G(\theta) = \int_0^t g(Z_s,c_s;\theta)\,\mathrm{d}s$. Utilizing noisy observations of $X$ at higher frequencies, $\widehat{S}(g)_t^n$ of this paper provides a means to compute a bias-corrected sample moment function of GMM.

\subsection{Linear regression}
In the practice of linear factor models and financial hedging, one faces the tasks of computing the factor loadings and the hedge ratios. These tasks can be formulated as the estimation of the coefficient $\beta$ in the time-series linear regression model
\begin{equation*}
Z_t^c = \beta^\T S_t^c + R_t
\end{equation*}
where
\begin{equation*}
\left\{\begin{array}{cl}
S_t\equiv& S_0 + \int_0^tb^S_u\ds u + \int_0^t\sigma^S_u\ds W^S_u + J^S_t\\
Z_t\equiv& Z_0 + \int_0^tb^Z_u\ds u + \beta^\T\int_0^t\sigma^S_u\ds W^S_u + \int_0^t\sigma^R_u\ds W^R_u + J^Z_t
\end{array}\right.
\end{equation*}
$\langle W^S,W^R\rangle=0$, $S_t\in\R^{d-1}$, $Z_t\in\R$, and $S^c$, $Z^c$ are the continuous parts of the It\^o semimartingales.

Let $X = (S^\T,Z)^\T$, we can write $X_t=X_0 + \int_0^tb_u\ds u + \int_0^t\sigma_u\ds W_u + J_t$ where $b=(b^{S,T}, b^Z)^\T$, $W=(W^{S,\T},W^R)^\T$, $J=(J^{S,\T},J^Z)^\T$ and
\begin{equation*}
\sigma = \left[
\begin{array}{cc}
\sigma^S & 0 \\
\beta^\T\sigma^S & \sigma^R
\end{array}\right]
\end{equation*}
so
\begin{equation*}
c = \sigma\sigma^\T = 
\left[\begin{array}{cc}
\sigma^S\sigma^{S,\T}         & \sigma^S\sigma^{S,\T}\beta \\
\beta^\T\sigma^S\sigma^{S,\T} & \beta^\T\sigma^S\sigma^{S,\T}\beta + (\sigma^R)^2
\end{array}\right]
\equiv
\left[\begin{array}{cc}
c^{SS} & c^{SZ} \\
c^{ZS} & c^{ZZ}
\end{array}\right]
\end{equation*}
hence by letting $g(c)=s^{SS,-1}c^{SZ}$, we have $\beta=t^{-1}S(g)_t$. \cite{ltt17} proposed this method for the situation in which the process $X$ can be perfectly observed. When the observations contain noise, the methodology of this paper can extend the estimator of \cite{ltt17} to wider applicability.

\subsection{Principal component analysis (PCA)}
An interesting question about stochastic volatility is its spectral structure $c_sv_s = \lambda_sv_s$.
\cite{ax19} applied PCA to nonstationary financial data by conducting inference on the realized eigenvalue $\int_0^t\lambda_s\ds s$, realized eigenvector $\int_0^t v_s\ds s$, realized principal component $\int_0^tv_{s-}\ds X_s$. In the basic setting where $\lambda_s$ is a simple eigenvalue of $c_s$ and $v_s$ is the corresponding eigenvector, $g(c_s)=\lambda_s$ and $g(c_s) = v_s$ are three-times continuously differentiable, therefore the inferential results of $S(g)$ are applicable. More recently, \cite{cmz18} extends the realized PCA to asynchronously observed high-dimensional noisy data,
while this paper extends the realized PCA methodology to be both noise-robust and rate-optimal.

\section{Simulation}
As a proof of concept, estimators corresponding to $g(c)=c^2$, $g(c)=\log(c)$ when $d=1$ are calculated based on the simulation model
\begin{equation*}
\left\{\begin{array}{lcl}
Y^n_i        &=&X^n_i + \e^n_i \\
\mathrm{d}X_t&=&.03\ds t + \sqrt{c_t}\ds W_t + J^X_t\ds N^X_t\\
\mathrm{d}c_t&=&6(.16-c_t)\ds t + .5\sqrt{c_t}\ds B_t + \sqrt{c_{t-}}J^c_t\ds N^c_t
\end{array}\right.
\end{equation*}
where $\e^n_i\overset{\text{i.i.d.}}{\sim}N(0,.005^2)$, $\E[(W_{t+\Delta}-W_t)(B_{t+\Delta}-B_t)]=-.6\Delta$, $J^X_t\sim N(-.01,.02^2)$, $N^X_{t+\Delta}-N^X_t\sim\mathrm{Poisson}(36\Delta)$, $\log(J^c_t)\sim N(-5,.8)$, $N^c_{t+\Delta}-N^c_t\sim\mathrm{Poisson}(12\Delta)$. 

Each simulation employs $23400\times21$ data points with $\Delta_n=1s$. We choose the following tuning parameters:
\begin{table}[H]
\begin{tabular}{l|lll}
	functionals    & $l_n$ & $k_n$ & $\nu_n$ \\
	\hline
	$g(c)=c^2$     & $\lfloor\Delta_n^{-.5}\rfloor$ & $\lfloor\Delta_n^{-.69}\rfloor$ & $1.6\overline{\sigma}^2\Delta_n^{.47}$ \\
	$g(c)=\log(c)$ & $\lfloor\Delta_n^{-.5}\rfloor$ & $\lfloor\Delta_n^{-.7}\rfloor$  & $1.5\overline{\sigma}^2\Delta_n^{.47}$
\end{tabular}
\end{table}
where $\overline{\sigma}^2$ is an estimate of the average volatility by bipower variation \cite{pv09a}.

The results are shown in figure \ref{MC}.
\begin{figure}[H]
	\centering
	\caption{Simulation of volatility functional estimators}\label{MC}
	\includegraphics[width=.5\textwidth]{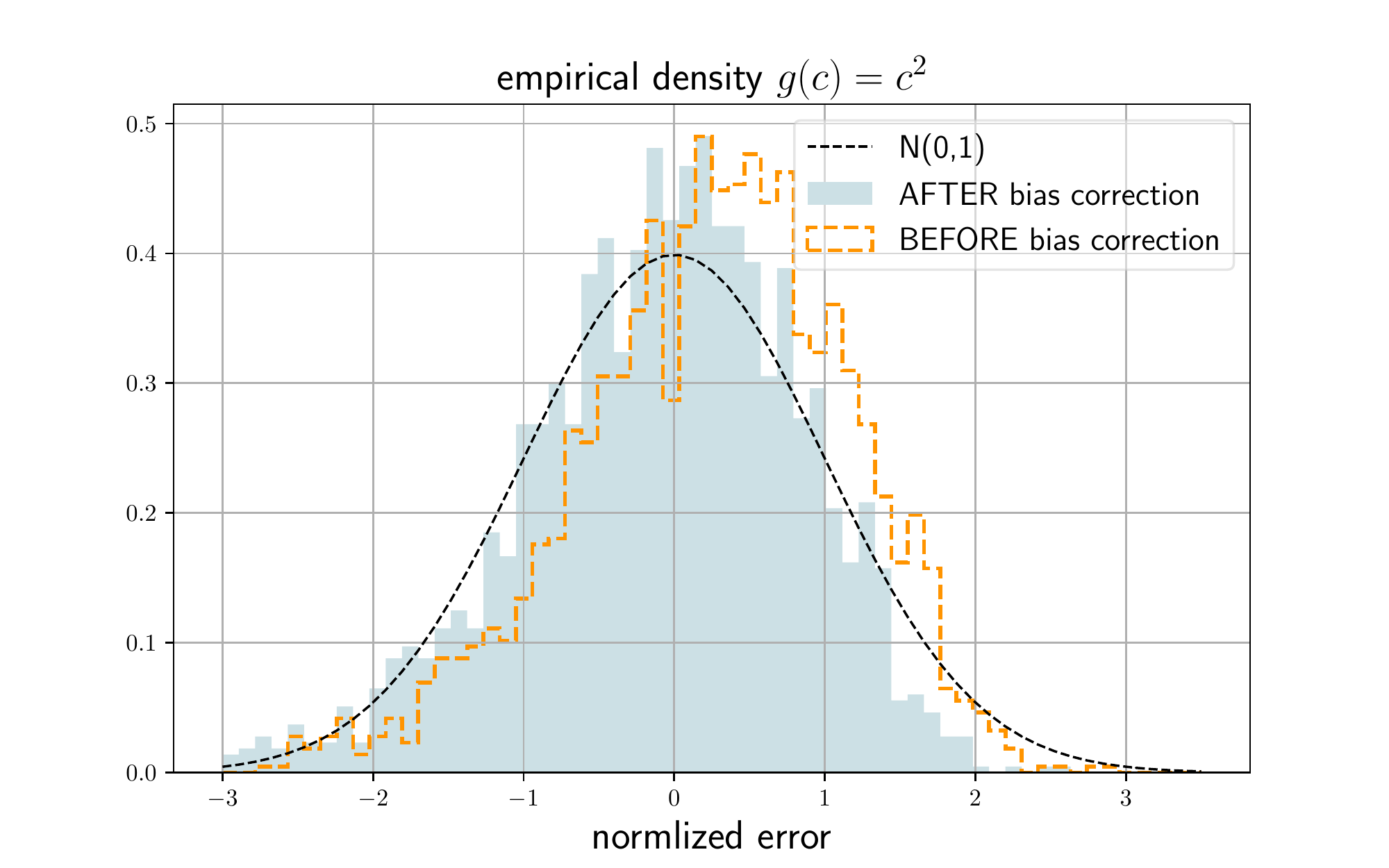}\includegraphics[width=.5\textwidth]{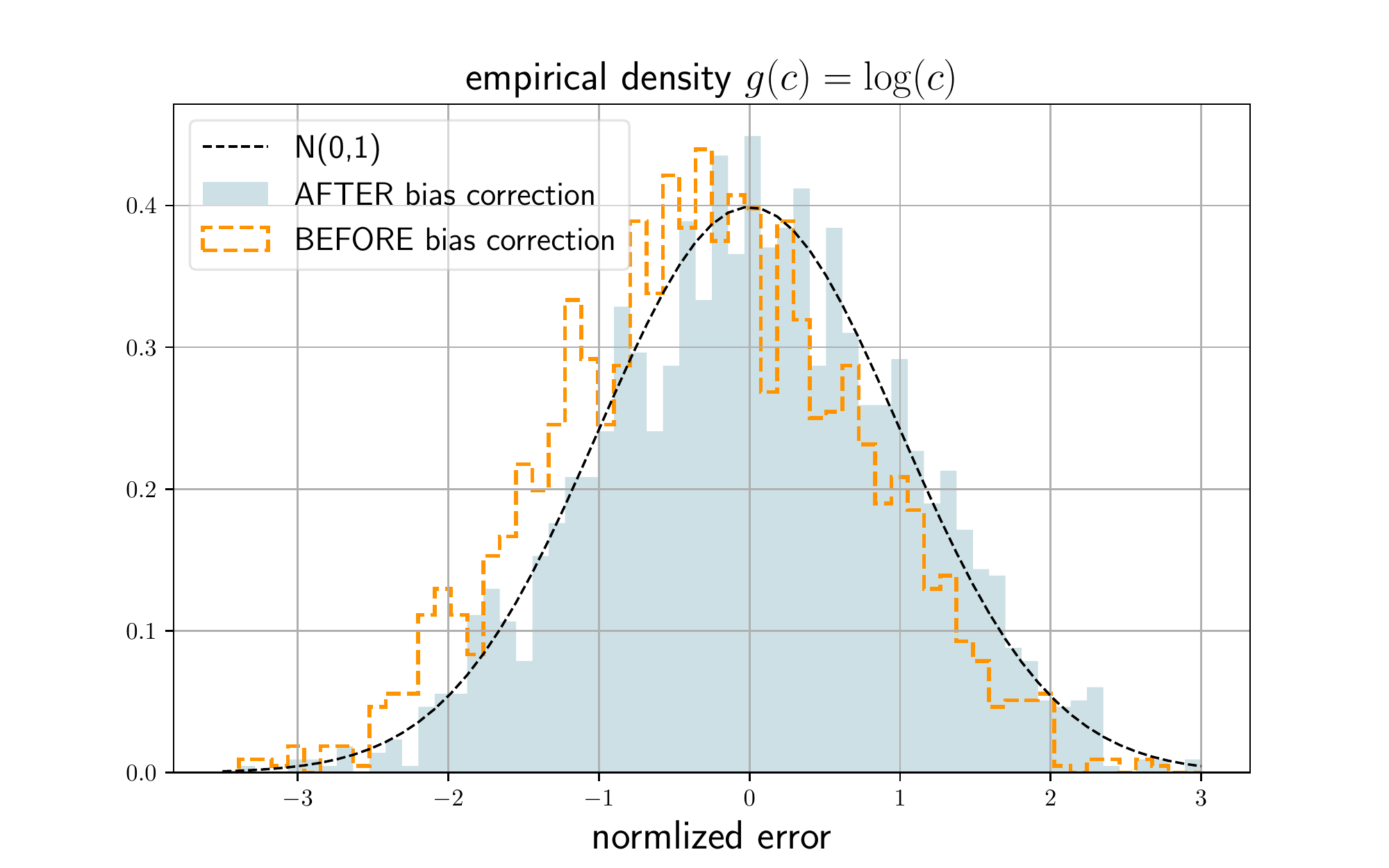}
\end{figure}

\section{Discussions}
\subsection{Jump truncation}
It is worthwhile to take account of the dimension $d$ and volatility levels in jump truncation. One possibility is to use the truncation indicator $\prod_{r=1}^d\1{|\widebar{Y}^{r,n}_i|\le\alpha_r\Delta_n^\rho}$ where $\alpha_r$ is related to the volatility of the $r$-th component.

\subsection{Semi-efficiency}

The efficient bound of volatility estimation is studied by \cite{r11}. Nonetheless, pre-averaging hasn't been able to attain the efficiency bound. According to \cite{jm15}, by taking the moving average adaptively in the time domain, the asymptotic variance of the pre-averaging method can be within 7\% of the efficiency bound. Using adaptive pre-averaging in volatility functional estimation is beyond the scope of this paper and is currently under investigation.

An efficient alternative is the spectral approach \cite{bhmr14, ab15}. The multi-scale approach \cite{z06}, realized kernels \cite{b08}, quasi-likelihood \cite{x10} are equally capable to rate-optimally handle noise. In the univariate case realized kernels and quasi-likelihood could be improved to be nearly efficient, cf. \cite{cp18}. The pre-averaging are adopted in this paper to simultaneously handle price jumps and microstructure noise.

\subsection{Finite-sample consideration}
First, we consider effective jump truncations. It is worthwhile to take account of the dimension $d$ and volatility levels in jump truncation. One possibility is to use the truncation indicator $\prod_{r=1}^d\1{|\widebar{Y}^{r,n}_i|\le\alpha_r\Delta_n^\rho}$ where $\alpha_r$ is related to the volatility of the $r$-th component.

Next, in finite sample, the spot volatility estimator (\ref{def.chat}) might not be positive semidefinite due to the noise-correction term $\widehat{Y}^n_i$. \cite{ckp10} suggests to increase $l_n$ to attenuate noise in $\widebar{Y}^n_i$ and dispense with $\widehat{Y}^n_i$:
\begin{equation*}
	\widetilde{c}^n_i \equiv \frac{1}{(k_n-l_n)\Delta_n}\sum_{h=1}^{k_n-l_n+1}\widebar{Y}^n_{i+h}\cdot\widebar{Y}^{n,\mathrm{T}}_{i+h}\1{\|\widebar{Y}^n_{i+h}\|\le\nu_n}
\end{equation*}
and let $l_n\asymp\theta\Delta_n^{-1/2-\delta}$ where $\delta\in(.1,.5)$. 
According to \cite{c19b}, if one plugs in $\widetilde{c}^n_i$ with the tunning parameters satisfying
\begin{equation*}
\left\{\begin{array}{rcll}
	k_n &\asymp& \varrho\Delta_n^{-\kappa} & \kappa\in\left(\big(\frac{2}{3}+\frac{2\delta}{3}\big)\vee\big(\frac{2+\nu}{4}+\frac{(2-\nu)\delta}{2}\big),\frac{3}{4}+\frac{\delta}{2}\right)\\
	\nu_n &=& \alpha\Delta_n^{\rho} &\rho\in \left[\frac{1}{4}+\frac{\delta}{2}+\frac{1-\kappa}{2-\nu},\frac{1}{2}\right)
\end{array}\right.
\end{equation*}
another central limit theorem holds after a different bias correction. However, doing so sacrifices the convergence rate, which drops from $n^{1/4}$ down to $n^{1/4-\delta/2}$ (strictly less than $n^{1/5}$). Moreover, the choice of tunning parameters becomes less robust compared to (\ref{tuning}).

To preserve the optimal convergence rate in the unfortunate event where the spot volatility estimator is not positive semidefinite, it is advisable to project $\widehat{c}^n_i$ onto the convex cone $\mathcal{S}^+_d$ with respect to the Frobenius norm. Suppose $\widehat{c}^n_i = Q\Lambda Q^\T$ is the eigenvalue factorization, the positive semidefinite projection is $\widehat{c}'^n_i=Q\Lambda_+Q^\T$ where $\Lambda_+^{jj} = \Lambda^{jj}\vee 0$. By the convex geometry of $\mathcal{S}^+_d$, we have $\|\widehat{c}'^n_i-c^n_i\|\le \|\widehat{c}^n_i-c^n_i\|$, hence the convergence rate is retained.



\appendix
\section{Assumptions}\label{sec:assump}
This section presents details of model specification and assumptions. 

The pure jump process is
\begin{equation}\label{J}
	J_t = \int_{(0,t]\times E}\delta(s,x)\,\mathfrak{p}(\mathrm{d}s,\ds x)
\end{equation}
where $\delta$ is a $\mathbb{R}^d$-valued predictable function on $\R^+\times E$, $E$ is a Polish space, $\mathfrak{p}$ is a Poisson random measure with compensator $\mathfrak{q}(\ds u, \ds x)=\ds u\otimes\lambda(\ds x)$, $\lambda$ is a $\sigma$-finite measure on $E$ and has no atom. The volatility process is assumed to be an It\^o semimartingale
\begin{equation}\label{c}
	c_t=c_0+\int_0^tb^{(c)}_s\ds s+\int_0^t\sigma^{(c)}_s\ds W_s + \int_{(0,t]\times E}\delta^{(c)}(s,x)\,(\mathfrak{p}-\mathfrak{q})(\mathrm{d}s,\ds x)
\end{equation}
where $b^{(c)}$ is $\R^{d\times d}$-valued, optional, c\`adl\`ag; $\sigma^{(c)}$ is $\R^{d \times d \times d'}$-valued, adapted, c\`adl\`ag; $\delta^{(c)}$ is a $\mathbb{R}^{d\times d}$-valued predictable function on $\R^+\times E$.\footnote{To guarantee positive semidefiniteness in concrete applications, one needs to impose parametric restrictions on the nonparametric model (\ref{c}). The result in this paper holds true for any process $c$ which is positive semidefinite and satisfies (\ref{c}).}

Let $\big(\Omega^{(0)},\F^{(0)},(\F^{(0)}_t),\mathbb{P}^{(0)}\big)$ be a filtered probability space with respect to which $X$, $c$ are $(\F^{(0)}_t)$-adapted; let $\big(\Omega^{(1)},\F^{(1)},(\F^{(1)}_t),\mathbb{P}^{(1)}\big)$ be another filtered probability space accommodating $Y$; $\forall t\ge0$, $\forall A\in\F^{(0)}$, let $Q_t(A,\cdot)$ be a conditional probability measure on $\left(\Omega^{(1)},\F^{(1)}\right)$. The conditional noise variance process is defined as
\begin{equation}\label{def.r}
	\gamma_t = \int_{\Omega^{(1)}}Y_t(\omega)Y_t(\omega)^\T\,Q_t(\cdot,\mathrm{d}\omega)-X_tX_t^\T 
\end{equation}
All the stochastic dynamics above can be described on the filtered extension $\left(\Omega,\F,(\mathcal{F}_t),\mathbb{P}\right)$, where
\begin{equation}\label{prob.sp}
\left\{\begin{array}{l}
	\Omega=\Omega^{(0)}\times\Omega^{(1)}\\
	\F=\F^{(0)}\otimes\F^{(1)}\\
	\F_t=\bigcap_{s>t}\big(\F^{(0)}_s\otimes\F_s^{(1)}\big),\hspace{5mm}  \widetilde{\F}_t=\bigcap_{s>t}\big(\F^{(0)}\otimes\F_s^{(1)}\big)\\
	\mathbb{P}\left(A\times \mathrm{d}\omega\right)=\mathbb{P}^{(0)}(A)\cdot\otimes_{t\ge0}Q_t(A,\ds\omega),\,\forall A\in\F^{(0)}
\end{array}\right.
\end{equation}
In the sequel, $\E(\cdot)$ denotes the expectation operator on $(\Omega^{(0)},\F^{(0)})$ or $(\Omega,\F)$; $E(\cdot|\mathcal{H})$ denotes the conditional expectation operator, with $\mathcal{H}$ being $\F^{(0)}_t$, $\F^{(1)}_t$, $\F_t$, $\widetilde{\F}_t$; $E^n_i(\cdot)$ denotes $E(\cdot|\F^n_i)$. Assumptions are collected below.
\begin{assump}{A-$\nu$}[regularity]\label{A-v} 
$b$ has $\frac{1}{2}$-H\"older sample path, i.e., $\forall t,s\ge0$,
\begin{equation*}
	E\Big(\sup_{u\in[0,s]}\|b_{t+u}-b_t\||\F^{(0)}_t\Big)\le Ks^{1/2},\,a.s.;
\end{equation*}
$c$ is of the form (\ref{c}), there is a sequence of triples $(\tau_m,\mathcal{S}_m,\Gamma_m)$, where $\tau_m$ is a stopping time and $\tau_m\nearrow\infty$;  $\s_m\subset\mathcal{S}^+_d$ is convex, compact such that 
\begin{equation*}
	t\in[0,\tau_m] \Rightarrow c_t\in\s_m;
\end{equation*} 
$\Gamma_m$ is a sequence of bounded $\lambda$-integrable functions on $E$, such that
\begin{equation*}
	t\in[0,\tau_m] \Longrightarrow \left\{
	\begin{array}{l}
		\|b_t\| + \|\sigma_t\| + \|b^{(c)}_t\| + \|\sigma^{(c)}_t\| \le m\\
		\|\delta(t,x)\|^\nu\wedge1\le\Gamma_m(x),\,\,\nu\in[0,1)\\
		\|\delta^{(c)}(t,x)\|^2\wedge1\le\Gamma_m(x)
	\end{array}\right.
\end{equation*}
\end{assump}

\begin{assump}{A-$\gamma$}[noise]\label{A-r}
$\forall t\in\mathbb{R}^+$,
	\[\int_{\Omega^{(1)}}Y_t(\omega)\,Q_t(\cdot,\ds\omega)=X_t\]
$\forall t\ne s$, $\forall A\in\F^{(0)}_{s\wedge t}$
\[\int_{\Omega^{(1)}\times\Omega^{(1)}}(Y_t(\omega)-X_t)(Y_s(\omega)-X_s)^\T\,Q_t(A,\ds\omega)\,Q_s(A,\ds\omega)=0\]
furthermore,
\begin{equation*}
	\gamma_t = \gamma_0+\int_0^tb^{(r)}_s\ds s + \int_0^t\sigma^{(r)}_s\ds W_s + \int_{(0,t]\times E}\delta^{(r)}(s,x)\,\mathfrak{p}(\mathrm{d}s,\ds x)
\end{equation*}
for the same $\tau_m$, $\Gamma_m$ in assumption \ref{A-v},
\begin{equation*}
	t\in[0,\tau_m] \Longrightarrow \left\{
	\begin{array}{l}
		\|b^{(r)}_t\| + \|\sigma^{(r)}_t\|\le m\\
		\|\delta^{(r)}(t,x)\|^2\wedge1\le\Gamma_m(x)
	\end{array}\right.
\end{equation*}
\end{assump}

\section{Derivation}\label{apdx:derivation}
\subsection{Preliminaries}
In the sequel, the constant $K$ changes across lines but remains finite; $K_q$ is a constant depending on $q$; $a_n=O_p(b_n)$ means $\forall\epsilon>0$, $\exists M>0$ such that $\sup_n\mathbb{P}(a_n/b_n>M)<\epsilon$ ; $a_n\asymp b_n$ means both $a_n/b_n$ and $b_n/a_n$ are bounded for large $n$. Six useful results are stated below.

I. By a localization argument from section 4.4.1 in \cite{jp12}, without loss of generality we can assume $\exists$ a constant K, a bounded $\lambda$-integrable function $\Gamma$ on $E$, a convex compact subset $\mathcal{S}\in\mathcal{S}^+_d$ and $\epsilon>0$, $g\in\mathcal{C}^3(\mathcal{S}^\epsilon)$ where $\mathcal{S}^\epsilon$ denotes the $\epsilon$-enlargement of $\mathcal{S}$ (see (\ref{g.cond})), such that
\begin{equation}\label{SA-v}
\left\{\begin{array}{l}
	\|b\| + \|\sigma\| + \|b^{(c)}\| + \|\sigma^{(c)}\| \le K\\
	\|\delta(t,x)\|^\nu\wedge1\le\Gamma(x),\,\,\nu\in[0,1)\\
	\|\delta^{(c)}(t,x)\|^2\wedge1\le\Gamma(x)\\
	c\in\mathcal{S}
\end{array}\right.
\end{equation}

II. Define a continuous It\^o semimartingale with parameters corresponding to those in (\ref{def.X}),
\begin{equation*}
X'_t = X_0 + \int_0^tb_s\ds s + \int_0^t\sigma_s\ds W_s
\end{equation*}
Let $Y^*= Y - X + X'$. Based on (\ref{def.Ubar.Uhat}), define
\begin{equation*}
\widehat{c}^{*n}_{i} = \frac{1}{(k_n-l_n)\Delta_n}\sum_{h=1}^{k_n-l_n+1}\left(\widebar{Y}^{*n}_{i+h}\cdot\widebar{Y}^{*n,\T}_{i+h} - \widehat{Y}^{*n}_{i+h}\right)
\end{equation*}
In the upcoming derivation, $\|\widehat{c}^n_i-\widehat{c}^{*n}_i\|$ is tightly bounded provided $\nu_n$ is properly chosen, the focus then will be shifted from $\widehat{c}^n_i$ to $\widehat{c}^{*n}_i$.

III. By estimates of It\^o semimartingale increments, $\forall$ finite stopping time $\tau$
\begin{equation}\label{classic}
	\left\{\begin{array}{ll}
		\big\|E\big(X'_{\tau+s}-X'_{\tau}|\F^{(0)}_\tau\big)\big\| + \big\|E\big(c_{\tau+s}-c_{\tau}|\F^{(0)}_\tau\big)\big\| \\
		\hspace{39mm}+ \big\|E\big(\gamma_{\tau+s}-\gamma_\tau|\F^{(0)}_\tau\big)\big\|& \le Ks\\
		E\left(\sup_{u\in[0,s]}\left\|X'_{\tau+u}-X'_\tau\right\|^q|\F^{(0)}_\tau\right)& \le Ks^{q/2}\\
		E\left(\sup_{u\in[0,s]}\left\|c_{\tau+u}-c_\tau\right\|^q + \left\|\gamma_{\tau+u}-\gamma_\tau\right\|^q|\F^{(0)}_\tau\right)& \le Ks^{(q/2)\wedge1}
\end{array}\right.
\end{equation}
by Lemma 2.1.7, Corollary 2.1.9 in \cite{jp12}
\begin{equation}\label{jump.bounds}
\left\{\begin{array}{l}
	E\left(\sup_{u\in[0,s]}\|J_{\tau+u}-J_\tau\|^q|\F^{(0)}_\tau\right)\le K_q\,sE\big[\widehat{\delta}(q)_{\tau,s}|\F^{(0)}_\tau\big]\\
	E\left[\sup_{u\in[0,s\wedge1]}\left(\frac{\|J_{\tau+u}-J_\tau\|}{s^w}\wedge1\right)^q|\F^{(0)}_\tau\right]\le
	K\,s^{(1-w\nu)(q/\nu\wedge1)}a(s)
\end{array}\right.
\end{equation}
where $\widehat{\delta}(q)_{t,s}\equiv s^{-1}\int_t^{t+s}\int_E\|\delta(u,x)\|^q\,\lambda(\mathrm{d}x)\ds u$ and $a(s)\to0$ as $s\to0$.

IV. Let $\varphi_n(t)=\sum_{h=1}^{l_n-1}\varphi^n_h\mathds{1}_{((h-1)\Delta_n,h\Delta_n]}(t)$. For a generic process $U$, define
\begin{equation}\label{def.Un}
	U^n_{t,s} = \int_t^{t+s}\varphi_n(u-t)\ds U_u
\end{equation}
this quantity is useful in analyzing $\widebar{U}^n_i$.

V. For $p\in\mathbb{N}^+$, $l,m=0,1$, by (\ref{phi.vars}) and Riemann summation,
\begin{equation}\label{sum.phi.quadratic}
\sum_{h=i}^{i+pl_n-2}\,\sum_{h'=h+1}^{i+pl_n-1}\phi_l\Big(\frac{h'-h}{l_n}\Big)\phi_m\Big(\frac{h'-h}{l_n}\Big) = l_n^2\left(p\Phi_{lm}-\Psi_{lm}\right) + O(pl_n)
\end{equation}

VI. By Jensen's inequality and Doob's maximal inequality, we have the following lemma:
\begin{lem}\label{Doob.max.ineq}
	Let $Z_i,i=1,\cdots,M$ be random variables, $\mathcal{H}_i=\sigma(Z_1,\cdots,Z_i)$ be the $\sigma$-algebra generated by $Z_1,\cdots,Z_i$, then
	\begin{equation*}
	\E\left(\sup_{m=1,\cdots,M}\bigg\|\sum_{i=1}^m\left[Z_i - E\left(Z_i|\mathcal{H}_i\right)\right]\bigg\|\right)
	\le K\left(\sum_{i=1}^{M}\E\left(\|Z_i\|^2\right)\right)^{1/2}
	\end{equation*}
\end{lem}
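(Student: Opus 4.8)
The plan is to recognize the centered partial sums as a martingale and then combine Doob's maximal inequality with orthogonality of the increments. Set $D_i = Z_i - E(Z_i\mid\mathcal{H}_{i-1})$ and $M_m = \sum_{i=1}^m D_i$, so that $M_m$ is $\mathcal{H}_m$-measurable and $E(D_i\mid\mathcal{H}_{i-1}) = 0$; hence $(M_m)_{1\le m\le M}$ is an $(\mathcal{H}_m)$-martingale with values in the relevant finite-dimensional space, and the target is a bound on $\E\big(\sup_{m\le M}\|M_m\|\big)$.

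First I would reduce the $L^1$-norm of the supremum to an $L^2$ quantity. By conditional Jensen applied to the convex function $x\mapsto\|x\|$ (taking the norm to be Euclidean, resp. Hilbert--Schmidt, for vectors, resp. matrices/tensors, with equivalence of norms on a finite-dimensional space absorbing any other choice into $K$), the real process $\|M_m\|$ is a nonnegative $(\mathcal{H}_m)$-submartingale, so Doob's $L^2$ maximal inequality gives $\E\big(\sup_{m\le M}\|M_m\|^2\big)\le 4\,\E\|M_M\|^2$. An application of (unconditional) Jensen then yields $\E\big(\sup_{m\le M}\|M_m\|\big)\le\big(\E\sup_{m\le M}\|M_m\|^2\big)^{1/2}\le 2\,\big(\E\|M_M\|^2\big)^{1/2}$.

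It remains to bound $\E\|M_M\|^2$. Expanding the square, $\E\|M_M\|^2 = \sum_{i=1}^M\E\|D_i\|^2 + 2\sum_{1\le i<j\le M}\E\langle D_i,D_j\rangle$, and each cross term vanishes because, conditioning on $\mathcal{H}_{j-1}$, the factor $D_i$ is $\mathcal{H}_{j-1}$-measurable while $E(D_j\mid\mathcal{H}_{j-1}) = 0$. Moreover $\E\|D_i\|^2 = \E\|Z_i\|^2 - \E\|E(Z_i\mid\mathcal{H}_{i-1})\|^2\le\E\|Z_i\|^2$, since conditional expectation is the orthogonal projection in $L^2$. Chaining these facts gives $\E\big(\sup_{m\le M}\|M_m\|\big)\le 2\big(\sum_{i=1}^M\E\|Z_i\|^2\big)^{1/2}$, which is the claim with $K = 2$. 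I do not expect any genuine obstacle here: the only points that need a line of justification are the submartingale property of $\|M_m\|$ for a vector/tensor-valued martingale (immediate from conditional Jensen) and the bookkeeping of the filtration — the centering in the lemma must be read with respect to $\mathcal{H}_{i-1}$, the $\sigma$-field generated by $Z_1,\dots,Z_{i-1}$, so that the $D_i$ are genuine martingale differences.
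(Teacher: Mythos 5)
Your proof is correct and fills in exactly the one-line hint the paper gives ("By Jensen's inequality and Doob's maximal inequality"): conditional Jensen makes $\|M_m\|$ a nonnegative submartingale, Doob's $L^2$ inequality plus Cauchy--Schwarz reduces the $L^1$ supremum to $\big(\E\|M_M\|^2\big)^{1/2}$, and orthogonality of martingale differences together with the $L^2$-projection property gives the bound with $K=2$. You also correctly flag the off-by-one in the statement's conditioning: as written $E(Z_i\mid\mathcal H_i)=Z_i$, so the intended filtration must be $\mathcal H_{i-1}$, which is indeed how the lemma is applied in the paper (conditioning on $\F^n_{ik_n}$ while $e^n_{ik_n}$ depends on the subsequent window).
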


\subsection{Properties of spot estimator: I. jumps}
By assumption \ref{A-v}, \ref{A-r}, (\ref{tuning}), (\ref{SA-v}), (\ref{classic}), (\ref{jump.bounds}), and $\widebar{J}^n_i = \psi_n^{-1/2}\widebar{J}^n_{i\Delta_n,(l_n-1)\Delta_n}$ from (\ref{def.Un}),
\begin{equation}\label{est.Y.bars.hats}
\left\{\begin{array}{lcl}
	E^n_i\left(\big\|\widebar{Y}^{*n}_i\big\|^q\right) &\le& K_q\Delta_n^{q/2}\\
	E^n_i\left(\big\|\widebar{Y}^n_i\big\|^q|\right) &\le& K_q\Delta_n^{(q/2)\wedge(q/4+1/2)}\\
	E^n_i\left(\big\|\widehat{Y}^{*n}_i\big\|^q \vee \big\|\widehat{Y}^n_i\big\|^q\right) &\le& K_q\Delta_n^q\\
	E^n_i\left[\left(\frac{\|\widebar{J}^n_i\|}{\Delta_n^w}\wedge1\right)^q\right] &\le& K_q\Delta_n^{[1/2-(w-1/4)\nu]\times[1\wedge(q/\nu)]}\,a_n
\end{array}\right.
\end{equation}
for some $a_n\to0$. We can write
	\[\left\|\big(\widebar{Y}^n_i\cdot\widebar{Y}^{n,\mathrm{T}}_i\1{\|\widebar{Y}^n_i\|\le\nu_n} - \widehat{Y}^n_i\big) - \big(\widebar{Y}^{*n}_i\cdot\widebar{Y}^{*n,\mathrm{T}}_i - \widehat{Y}^{*n}_i\big)\right\| \le \sum_{r=1}^3\eta^{n,r}_i\]
where
\begin{eqnarray*}
	\eta^{n,1}_i &=& \big\| \widebar{Y}^n_i\cdot\widebar{Y}^{n,\mathrm{T}}_i\1{\|\widebar{Y}^n_i\|\le\nu_n} - \widebar{Y}^{*n}_i\cdot\widebar{Y}^{*n,\mathrm{T}}_i\1{\|\widebar{Y}^{*n}_i\|\le\nu_n} \big\|\\
	\eta^{n,2}_i &=& \big\|\widehat{Y}^n_i - \widehat{Y}^{*n}_i\big\|\\
	\eta^{n,3}_i &=& \big\|\widebar{Y}^{*n}_i\big\|^2 \1{\|\widebar{Y}^{*n}_i\|>\nu_n}
\end{eqnarray*}

Let $u_n=\nu_n/\Delta_n^{1/2}$, $Z^n_i=\|\widebar{Y}^{*n}_i\|/\Delta_n^{1/2}$, $Q^n_i=(\|\widebar{J}^n_i\|/\Delta_n^{1/2})\wedge1$, $V^n_i=(\|\widebar{J}^n_i\|/\Delta_n^{\rho})\wedge1$, we have
	\[\Delta_n\eta^{n,1}_i \le u_n^{-2/(1-2\rho)}(Z^n_i)^{2+2/(1-2\rho)} + (1+Z^n_i) \left[Q^n_i + u_n^2 (V^n_i)^2\right]\]
By successive conditioning and (\ref{jump.bounds}), there is a sequence $a_n\to0$ such that
\begin{equation*}
	E^n_i\big[(\eta^{n,1}_i)^q\big] \le K_q\Delta_n^{2\rho q + 1/2-(\rho-1/4)\nu}a_n
\end{equation*}
Analyzing $\eta^{n,2}_i$ with (\ref{classic}), (\ref{jump.bounds}), analyzing $\eta^{n,3}_i$ with Cauchy-Schwarz inequality, Markov's inequality, (\ref{est.Y.bars.hats}), we can get the following lemma:
\begin{lem}\label{est.chat-chat*}
	Assume (\ref{tuning}), (\ref{SA-v}), assumption \ref{A-v}, \ref{A-r}, then $\exists\,a_n\to0$ such that
	\begin{equation*}
		E^n_i\left(\|\widehat{c}^n_i-\widehat{c}^{*n}_i\|^q\right) \le K_q\left(a_n\Delta_n^{1/2-(\rho-1/4)\nu-(1-2\rho)q} + \Delta_n^{1/2} \right)
	\end{equation*}
\end{lem}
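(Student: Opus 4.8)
The plan is to estimate each of the three pieces $\eta^{n,1}_i$, $\eta^{n,2}_i$, $\eta^{n,3}_i$ separately in $L^q$ conditionally on $\F^n_i$, then combine via the triangle inequality and the averaging over $h=1,\dots,k_n-l_n+1$ in the definitions of $\widehat c^n_i$ and $\widehat c^{*n}_i$. Since $\widehat c^n_i-\widehat c^{*n}_i = \frac{1}{(k_n-l_n)\Delta_n}\sum_h\big[(\widebar Y^n_{i+h}\widebar Y^{n,\T}_{i+h}\1{\|\widebar Y^n_{i+h}\|\le\nu_n}-\widehat Y^n_{i+h})-(\widebar Y^{*n}_{i+h}\widebar Y^{*n,\T}_{i+h}-\widehat Y^{*n}_{i+h})\big]$, Minkowski's inequality gives $\|\widehat c^n_i-\widehat c^{*n}_i\|_{L^q}\le \frac{1}{(k_n-l_n)\Delta_n}\sum_h\sum_{r=1}^3\|\eta^{n,r}_{i+h}\|_{L^q}$; the sum has $\asymp k_n$ terms and the prefactor is $\asymp (k_n\Delta_n)^{-1}$, so it remains to control each $E^n_{i}\big[(\eta^{n,r}_{i+h})^q\big]$ uniformly in $h$ and pay a factor $\Delta_n^{-1}$.

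First I would handle $\eta^{n,1}_i$ using the pointwise bound already recorded in the excerpt,
\[
\Delta_n\eta^{n,1}_i \le u_n^{-2/(1-2\rho)}(Z^n_i)^{2+2/(1-2\rho)} + (1+Z^n_i)\big[Q^n_i + u_n^2 (V^n_i)^2\big],
\]
raising to the $q$-th power, expanding, and taking $E^n_i$. The moments of $Z^n_i=\|\widebar Y^{*n}_i\|/\Delta_n^{1/2}$ are $O(1)$ by the first line of (\ref{est.Y.bars.hats}); the moments of $Q^n_i$ and $V^n_i$, which are truncated normalizations of the jump pre-average $\widebar J^n_i$, are governed by the fourth line of (\ref{est.Y.bars.hats}) with $w=1/2$ (for $Q^n_i$) and $w=\rho$ (for $V^n_i$), each supplying a vanishing factor $a_n$. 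The cross terms are split with Hölder's inequality so that the jump factors carry the small powers. Tracking exponents, $u_n=\nu_n/\Delta_n^{1/2}\asymp\Delta_n^{\rho-1/2}$, and one arrives at $E^n_i[(\eta^{n,1}_i)^q]\le K_q\Delta_n^{2\rho q+1/2-(\rho-1/4)\nu}a_n$ as stated — the dominant contribution coming from the $u_n^2(V^n_i)^2$ term. Next, $\eta^{n,2}_i=\|\widehat Y^n_i-\widehat Y^{*n}_i\|$ differs only through $J$ inside the squared increments $\Delta^n_{i+h}Y\cdot\Delta^n_{i+h}Y^\T$; expanding the difference into terms linear and quadratic in $\Delta^n_{i+h}J$ and applying (\ref{classic}) for the diffusive increments and (\ref{jump.bounds}) for the jump increments yields a bound of the form $K_q\Delta_n^{q}a_n$ (finite-variation jumps make $\widebar J^n_i$ genuinely small), which is dominated by the $\Delta_n^{1/2}$ term after the $\Delta_n^{-1}$ loss. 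Finally $\eta^{n,3}_i=\|\widebar Y^{*n}_i\|^2\1{\|\widebar Y^{*n}_i\|>\nu_n}$ is treated by Cauchy–Schwarz, $E^n_i[(\eta^{n,3}_i)^q]\le \big(E^n_i\|\widebar Y^{*n}_i\|^{4q}\big)^{1/2}\big(\mathbb P^n_i(\|\widebar Y^{*n}_i\|>\nu_n)\big)^{1/2}$, and Markov's inequality applied with a high moment of $\|\widebar Y^{*n}_i\|$ from (\ref{est.Y.bars.hats}) makes the tail probability $O(\Delta_n^p)$ for any $p$; this term is therefore negligible.

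Assembling the three estimates, multiplying by the $\Delta_n^{-1}$ from the averaging prefactor (the $\sum_h$ of $\asymp k_n$ terms against $(k_n\Delta_n)^{-1}$ leaves exactly one $\Delta_n^{-1}$), and absorbing the $\eta^{n,2}$ and $\eta^{n,3}$ contributions into the residual $\Delta_n^{1/2}$ gives
\[
E^n_i\big(\|\widehat c^n_i-\widehat c^{*n}_i\|^q\big)\le K_q\big(a_n\Delta_n^{1/2-(\rho-1/4)\nu-(1-2\rho)q}+\Delta_n^{1/2}\big),
\]
where the exponent $1/2-(\rho-1/4)\nu-(1-2\rho)q$ is $2\rho q+1/2-(\rho-1/4)\nu-q$ rewritten, i.e. the $\eta^{n,1}$ exponent after the $\Delta_n^{-1}$ loss. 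I expect the main obstacle to be the bookkeeping in the $\eta^{n,1}$ term: one must split every mixed product by Hölder's inequality so that the vanishing jump factors $a_n$ and the correct powers of $u_n$ land on the right pieces, and then verify that among all the resulting exponents the claimed one is indeed the smallest (hence dominant) — in particular checking that the pure-diffusion over-truncation term $u_n^{-2/(1-2\rho)}(Z^n_i)^{2+2/(1-2\rho)}$, which contributes $\Delta_n$ to the power $(1/2-\rho)\cdot\frac{2}{1-2\rho}=1$ times $q$ (roughly), does not beat the stated rate under the constraint $\rho\in[\frac14+\frac{1-\kappa}{2-\nu},\frac12)$ from (\ref{tuning}). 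Everything else is a routine application of the already-quoted increment and pre-averaging estimates together with successive conditioning.
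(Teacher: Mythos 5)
Your proposal follows the paper's route almost line for line: you use the identical decomposition of the summand difference into $\eta^{n,1}_i$, $\eta^{n,2}_i$, $\eta^{n,3}_i$, invoke the same pointwise bound $\Delta_n\eta^{n,1}_i\le u_n^{-2/(1-2\rho)}(Z^n_i)^{2+2/(1-2\rho)}+(1+Z^n_i)[Q^n_i+u_n^2(V^n_i)^2]$, and close with the same ingredients (fourth line of (\ref{est.Y.bars.hats}) for the jump factors, Cauchy--Schwarz and Markov for $\eta^{n,3}_i$, (\ref{classic}) and (\ref{jump.bounds}) for $\eta^{n,2}_i$). Two small points of imprecision worth flagging, though neither changes the conclusion. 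First, the phrase ``pay a factor $\Delta_n^{-1}$'' refers to the $L^q$-norm; in the $q$-th moment this is a factor $\Delta_n^{-q}$, i.e.\ the exponent drops by $q$, which is exactly what turns $2\rho q+\tfrac12-(\rho-\tfrac14)\nu$ into $\tfrac12-(\rho-\tfrac14)\nu-(1-2\rho)q$ --- your arithmetic is right, but the verbal description slightly understates the loss. Second, your $\eta^{n,2}_i$ estimate ``$K_q\Delta_n^{q}a_n$'' would, after the $\Delta_n^{-q}$ averaging factor, contribute $a_n$ (i.e.\ $o(1)$ but not $O(\Delta_n^{1/2})$); one needs the sharper bound, using the finite-variation structure of $J$ together with (\ref{classic}) and the second line of (\ref{jump.bounds}), to see that this term in fact produces the residual $\Delta_n^{1/2}$ in the statement of the lemma rather than being absorbed into the $a_n$-term. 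The concern you raise at the end --- verifying that the over-truncation term $u_n^{-2/(1-2\rho)}(Z^n_i)^{2+2/(1-2\rho)}$ does not dominate --- is the right one, and is handled by the polynomial tail of $Z^n_i$: that term contributes only to the event $\{Z^n_i>u_n\}$, whose probability decays faster than any power of $\Delta_n$ since $u_n\asymp\Delta_n^{\rho-1/2}\to\infty$, so it is negligible at any polynomial rate exactly as in your treatment of $\eta^{n,3}_i$.
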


\subsection{Properties of spot estimator: II. continuous part}
\subsubsection{variables}
``If there is a rifle handing on the wall in act one, it must be fired in the next act. Otherwise it has no business being there'', said the Russian playwright Anton Chekhov. Define
\begin{eqnarray*}
	\widebar{C}^n_i&=&\frac{1}{\psi_n}\sum_{h=1}^{l_n-1}(\varphi^n_h)^2\Delta^n_{i+h}C,\,\, C_t=\int_0^tc_s\ds s\\
	D^n_i&=&\widebar{C}^n_i-c^n_i\Delta_n\\
	\Gamma^n_h&=&\Gamma^n_{h,h},\,\,\Gamma^n_{h,h'}=\frac{1}{\psi_n}\sum_{v=h\vee h'}^{h\wedge h'+l_n-1}(\varphi^n_{v-h+1}-\varphi^n_{n-h})(\varphi^n_{v-h'+1}-\varphi^n_{v-h'})\gamma^n_v\\
	R^n_i&=&\widehat{Y}^{*n}_i - \Gamma^n_i\\
	\zeta^n_i&=&\widebar{Y}^{*n}_i\cdot\widebar{Y}^{*n,\T}_i-\widebar{C}^n_i-\Gamma^n_i
\end{eqnarray*}
given $p\in \mathbb{N}^+$, define
\begin{eqnarray*}
	\zeta(W,p)^n_i &=& \sum_{h=i}^{i+pl_n-1}\big[(\sigma^n_i\widebar{W}^n_h)\cdot(\sigma^n_i\widebar{W}^n_h)^\T-\widebar{C}^n_h\big]\\
	\zeta(X,p)^n_i &=& \sum_{h=i}^{i+pl_n-1}\big(\widebar{X}^n_h\cdot\widebar{X}^{n,\T}_h - \widebar{C}^n_h\big)\\
	\zeta(X,p)'^n_i &=& \sum_{h=i}^{i+pl_n-2}\,\sum_{h'=h+1}^{i+pl_n-1}\widebar{X}^n_h\cdot\widebar{X}^{n,\T}_{h'}\phi_1\Big(\frac{h'-h}{l_n}\Big)\\
	\zeta(p)^n_i &=& \sum_{h=i}^{i+pl_n-1}\zeta^n_h
\end{eqnarray*}
let $m(n,p)=\left\lfloor\frac{k_n}{(p+1)l_n}\right\rfloor$, $a(n,p,h)=1+h(p+1)l_n$, $b(n,p,h)=a(n,p,h)+pl_n$, then the estimation error of $\widehat{c}^{*n}_i$ can be decomposed as
\begin{equation}\label{errors.chat*}
	\beta^n_i \equiv \widehat{c}^{*n}_i-c^n_i = \xi^{n,0}_i + \xi^{n,1}_i + \xi^{n,2}_i + N(p)^n_i + M(p)^n_i
\end{equation}
where
\begin{eqnarray*}
	\xi^{n,0}_i&=&\frac{1}{k_n-l_n}\sum_{h=1}^{k_n-l_n+1}c^n_{i+h}-c^n_i\\
	\xi^{n,1}_i&=&\frac{1}{(k_n-l_n)\Delta_n}\sum_{h=1}^{k_n-l_n+1}D^n_{i+h}\\
	\xi^{n,2}_i&=&\frac{-1}{(k_n-l_n)\Delta_n}\sum_{h=1}^{k_n-l_n+1}R^n_{i+h}\\
	N(p)^n_i&=&\frac{1}{(k_n-l_n)\Delta_n}\Big(\sum_{h=0}^{m(n,p)-1}\zeta(1)^n_{i+b(n,p,h)}+\sum_{h=m(n,p)(p+1)l_n}^{k_n-l_n}\zeta^n_{i+1+h}\Big)\\
	M(p)^n_i&=&\frac{1}{(k_n-l_n)\Delta_n}\sum_{h=0}^{m(n,p)-1}\zeta(p)^n_{i+a(n,p,h)}
\end{eqnarray*}

\subsubsection{bounds on $\|\xi^{n,r}_i\|$}
By assumption \ref{A-v}, (\ref{SA-v}), (\ref{classic})
\begin{equation}\label{est.xi(0)n}
\left\{\begin{array}{lcl}
	\left\| E\left(\xi^{n,0}_{i}|\F^{(0),n}_{i}\right) \right\| &\le& Kk_n\Delta_n\\
	E\left( \|\xi^{n,0}_{i}\|^q|\F^{(0),n}_{i}\right) &\le& K_q(k_n\Delta_n)^{(q/2)\wedge1},\, q\ge0
\end{array}\right.
\end{equation}
combined with (\ref{tuning}),
\begin{equation}\label{est.xi(1)n}
\left\{\begin{array}{lcl}
	\left\| E\left(\xi^{n,1}_i|\F^{(0),n}_i\right) \right\| &\le& K\Delta_n^{1/2}\\
	E\left(\|\xi^{n,1}_i\|^q|\F^{(0),n}_i\right) &\le& K_q\Delta_n^{[(q/2)\wedge1]/2},\,q\in\mathbb{N}^+
\end{array}\right.
\end{equation}
By assumption \ref{A-r},
\begin{equation}\label{est.xi(2)n}
\begin{array}{l}
	\left\| E^n_i\big(\xi^{n,2}_i|\big) \right\| \le K\Delta_n^{-1}\\
	E^n_i\big(\|\xi^{n,2}_i\|^q\big) \le
	\left\{\begin{array}{l}
		K\,k_n^{-1/2},\, q=1;\\
		K_q\left(k_n^{-q+1} + k_n^{-q}\Delta_n^{-q/2+1}\right), q\in\mathbb{N}^+/\{1\}.
	\end{array}\right.
\end{array}
\end{equation}

\subsubsection{estimates of $\zeta(X,p)^n_i$ \& $\zeta(X,p)'^n_i$}
$\widebar{C}^n_i=(\psi^n)^{-1}C^n_{i\Delta_n,(l_n-1)\Delta_n}$ in view of (\ref{def.Un}),
hence by (\ref{SA-v})
\begin{equation}\label{est.Cbar}
	\|\widebar{C}^n_i\| \le K\Delta_n
\end{equation}
According to (\ref{phi.cond}) we have $\widebar{X}^n_i=-\psi_n^{-1/2}\sum_{h=0}^{l_n-1}(\varphi^n_{h+1}-\varphi^n_h)(X^n_{i+h}-X^n_i)$, then by (\ref{classic})
\begin{equation}\label{est.Xbar}
E\left(\|\widebar{X}^n_i\|^q|\F^{(0),n}_i\right)\le K_q\Delta_n^{q/2}
\end{equation}

Adopt the argument for (5.21) in \cite{j09} in the multivariate setting, we have
\begin{equation}\label{est.Wbar}
\begin{array}{lcl}
	E\big(\widebar{W}^n_h\widebar{W}^{n,\T}_{h'}|\F^{(0),n}_i\big) &=& \frac{l_n\Delta_n}{\psi_n}\phi_0\big(\frac{|h'-h|}{l_n}\big)\I + O_p(l_n^{-1/2}\Delta_n)\\
	E\big(\|\widebar{W}^n_h\|^{2m}|\F^{(0),n}_i\big) &=& \Delta_n^m(2m-1)!! + O_p(l_n^{-1}\Delta_n^m),\,\,m\in\mathbb{N}^+
\end{array}
\end{equation}
Let $U^n_i(p)=\sum_{h=i}^{i+pl_n-1}(\sigma^n_i\widebar{W}^n_h)(\sigma^n_i\widebar{W}^n_h)^\T$, $S^n_i(p)=\sum_{h=i}^{i+pl_n-1}\widebar{C}^n_h$, then
\begin{multline}\label{zeta.W.p.2}
	\zeta(W,p)^{n,jk}_i\zeta(W,p)^{n,lm}_i = U^n_i(p)^{jk}U^n_i(p)^{lm} + S^n_i(p)^{jk}S^n_i(p)^{lm}\\ 
	- U^n_i(p)^{jk}S^n_i(p)^{lm} - U^n_i(p)^{lm}S^n_i(p)^{jk}
\end{multline}
By (\ref{sum.phi.quadratic}), (\ref{est.Cbar}), (\ref{est.Wbar}), (\ref{zeta.W.p.2}), and through similar arguments in section 5.3 of \cite{j09} with a modification for multi-dimension, and exploit the connection between $\zeta(W,p)^n_i$ and $\zeta(X,p)^n_i$, in view of (\ref{tensors}), we have the following lemma:
\begin{lem}\label{est.zetaX}
Assume assumption \ref{A-v}, (\ref{SA-v}), $l_n$ satisfies (\ref{tuning}), then
\begin{eqnarray*}
	E\left(\|\zeta(X,p)^n_i\|^4|\F^{(0),n}_i\right)&\le&Kp^4\Delta_n^2\\
	\left\|E\left[\zeta(X,p)^n_i|\F^{(0),n}_i\right]\right\| &\le& Kp\Delta_n\\
	E\left[\zeta(X,p)'^n_i|\F^{(0),n}_i\right] &=& \frac{\theta^2l_n}{\psi_n}(p\Phi_{01}-\Psi_{01})\,c^n_i + p^2\,O_p(\Delta_n^{1/4})\\
	E\left[\zeta(X,p)^{n,jk}_i\zeta(X,p)^{n,lm}_i|\F^{(0),n}_i\right] &=& \frac{2\theta^4}{\psi_n^2}(p\Phi_{00}-\Psi_{00})\,\Sigma(c^n_i)^{jk,lm}\\
	&&\hspace{35mm} +p^2\,O_p(\Delta_n^{5/4})
\end{eqnarray*}
\end{lem}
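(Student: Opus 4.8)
The plan is to establish the four estimates in Lemma \ref{est.zetaX} by carefully expanding $\zeta(X,p)^n_i$ into a Brownian-motion-driven principal term plus lower-order remainders, and then transferring moment bounds from the tractable Gaussian quantity $\zeta(W,p)^n_i$ to $\zeta(X,p)^n_i$. First I would write $\widebar{X}^n_i = \sigma^n_i\widebar{W}^n_i + e^n_i$ where $e^n_i$ collects the contribution of the drift, of the volatility fluctuation $\sigma_s - \sigma^n_i$ over the window, and (in the continuous-part analysis) nothing else since we work with $X'$; using (\ref{classic}), (\ref{est.Xbar}), (\ref{est.Wbar}) one shows $E^n_i(\|e^n_i\|^q) \le K_q \Delta_n^{q/2}(l_n\Delta_n)^{q/4}$-type bounds, so that $e^n_i$ is genuinely of smaller order than the $O_p(\Delta_n^{1/2})$-size leading term. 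Then $\widebar{X}^n_h\widebar{X}^{n,\T}_h - \widebar{C}^n_h$ differs from $(\sigma^n_i\widebar{W}^n_h)(\sigma^n_i\widebar{W}^n_h)^\T - \widebar{C}^n_h$ by cross terms of the form $\sigma^n_i\widebar{W}^n_h (e^n_h)^\T$ and $e^n_h(e^n_h)^\T$, and summing over the $pl_n$ indices in the window gives $\zeta(X,p)^n_i = \zeta(W,p)^n_i + (\text{remainder})$, where the remainder is controlled by Cauchy--Schwarz together with Lemma \ref{Doob.max.ineq} applied to the martingale increments.

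Once this reduction is in place, the four assertions follow from the corresponding facts about $\zeta(W,p)^n_i$, which are essentially the multivariate versions of the computations in Sections 5.1--5.3 of \cite{j09}. For the fourth moment bound, I would use that $U^n_i(p)$ is a sum of $pl_n$ terms each of size $\Delta_n$ whose correlations decay in $|h'-h|/l_n$ via $\phi_0$, so $\|U^n_i(p) - S^n_i(p)\|$ has fourth moment $O(p^4\Delta_n^2)$ by the decomposition (\ref{zeta.W.p.2}) and (\ref{est.Wbar}); the conditional-mean bound $\|E^n_i\zeta(X,p)^n_i\| \le Kp\Delta_n$ comes from $E^n_i[(\sigma^n_i\widebar{W}^n_h)(\sigma^n_i\widebar{W}^n_h)^\T] = \widebar{C}^n_h + O_p(l_n^{-1}\Delta_n)$ (so each of the $pl_n$ summands contributes $O(l_n^{-1}\Delta_n)$) plus the bias $E^n_i(e^n_h)$ from the volatility drift. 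The third identity for $\zeta(X,p)'^n_i$ is the key structural computation: expanding $\widebar{X}^n_h\widebar{X}^{n,\T}_{h'}$ for $h<h'$ and taking conditional expectation picks out the overlapping-Brownian-increment term, and applying the Riemann-sum identity (\ref{sum.phi.quadratic}) with $l=0,m=1$ converts the double sum $\sum_h\sum_{h'}\phi_1((h'-h)/l_n)\cdot(\text{covariance}\propto\phi_0)$ into $l_n^2(p\Phi_{01}-\Psi_{01})$; the prefactor bookkeeping $\theta^2 l_n/\psi_n$ comes from $l_n\Delta_n/\psi_n$ and $l_n\asymp\theta\Delta_n^{-1/2}$. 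Finally the fourth-order identity for $\zeta(X,p)^{n,jk}_i\zeta(X,p)^{n,lm}_i$ uses the Gaussian moment formula (Isserlis/Wick) applied to $\widebar{W}^n_h$ to generate the $\Sigma$-tensor contraction $x^{jl}x^{km}+x^{jm}x^{kl}$ with $x=c^n_i$, again summed against $\phi_0^2$ via (\ref{sum.phi.quadratic}) with $l=m=0$ to produce $p\Phi_{00}-\Psi_{00}$.

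The main obstacle I anticipate is controlling the remainder $\zeta(X,p)^n_i - \zeta(W,p)^n_i$ with enough precision for the last two \emph{identities} rather than just the first two \emph{inequalities}: it is not enough that the remainder is $o_p$ of the leading term in $L^1$ or $L^2$; for the statements about $E^n_i[\zeta(X,p)'^n_i]$ and $E^n_i[\zeta(X,p)^{n,jk}_i\zeta(X,p)^{n,lm}_i]$ one must show the cross terms and the replacement of $\sigma_s$ by $\sigma^n_i$ inside the Brownian stochastic integrals contribute only at the stated $p^2 O_p(\Delta_n^{1/4})$ and $p^2 O_p(\Delta_n^{5/4})$ levels. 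This requires a delicate second-order expansion of $\widebar{X}^n_h$ that separates the $\F^{(0),n}_i$-measurable coefficient $\sigma^n_i$ from a martingale correction $\int (\sigma_s - \sigma^n_i)\ds W_s$ and a drift correction, and then estimating sums of products of such corrections using conditional Cauchy--Schwarz together with (\ref{classic}) (so that $E^n_i\|\sigma_s-\sigma^n_i\|^q \le K_q(l_n\Delta_n)^{(q/2)\wedge1}$) and the sharp Brownian moments (\ref{est.Wbar}). The bookkeeping over the $p^2$-many pairs $(h,h')$ within a window of length $pl_n$, keeping track that the $p$-dependence is exactly quadratic and the $\Delta_n$-power is exactly as claimed, is the part where errors are easiest to make; everything else is a fairly mechanical adaptation of the one-dimensional arguments in \cite{j09} to matrix-valued quantities and the tensor notation (\ref{tensors}).
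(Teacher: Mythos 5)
Your proposal matches the paper's approach. The paper's argument for Lemma~\ref{est.zetaX} is deliberately terse: it establishes (\ref{est.Cbar}), (\ref{est.Xbar}), (\ref{est.Wbar}), writes out the algebraic identity (\ref{zeta.W.p.2}), and then instructs the reader to ``adopt the argument for (5.21) in \cite{j09} in the multivariate setting'' and to ``exploit the connection between $\zeta(W,p)^n_i$ and $\zeta(X,p)^n_i$'' using (\ref{sum.phi.quadratic}) and (\ref{tensors}). Your plan is exactly that argument unpacked: decompose $\widebar{X}^n_h = \sigma^n_i\widebar{W}^n_h + e^n_h$ with $e^n_h$ carrying drift and volatility-fluctuation corrections, transfer moment bounds from $\zeta(W,p)^n_i$ to $\zeta(X,p)^n_i$, compute $\zeta(W,p)^n_i$-moments via the conditionally Gaussian structure in (\ref{est.Wbar}) and Wick pairings, and convert the resulting $\phi_0\phi_1$ and $\phi_0^2$ double sums to $l_n^2(p\Phi_{01}-\Psi_{01})$ and $l_n^2(p\Phi_{00}-\Psi_{00})$ via (\ref{sum.phi.quadratic}). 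Your prefactor bookkeeping ($l_n\Delta_n/\psi_n$ from the Brownian covariance times $l_n^2$ from the Riemann sum yielding $\theta^2 l_n/\psi_n$) is also correct, and you correctly identify the delicate point, namely that the last two assertions are identities with explicit error orders, so the remainder analysis must be done with second-order precision rather than mere $o_p$-negligibility. The one quibble is that your stated remainder bound $E^n_i(\|e^n_i\|^q)\le K_q\Delta_n^{q/2}(l_n\Delta_n)^{q/4}$ is looser than what (\ref{classic}) actually gives (roughly $K_q\Delta_n^{3q/4}$), but since it is still a valid upper bound and of strictly smaller order than the $\Delta_n^{q/2}$ leading term, this does not affect the argument.
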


\subsubsection{estimates of $\zeta(p)^n_i$}
For $i\le h,h'\le i+pl_n-1$, by (\ref{sum.phi.quadratic})
\begin{eqnarray}\label{est.Gamma}
	\Gamma^n_{h,h'} &=& \frac{1}{\psi_nl_n}\phi_1\Big(\frac{|h'-h|}{l_n}\Big)\rr_i+O_p(\Delta_n^{5/4})\\
	\sum_{h=i}^{i+pl_n-2}\,\sum_{h'=h+1}^{i+pl_n-1}\Gamma^{n,jk}_{h,h'}\Gamma^{n,lm}_{h,h'}&=&\frac{1}{\psi_n^2}\left(p\Phi_{11}-\Psi_{11}\right)\gamma^{n,jk}_i\gamma^{n,lm}_i + p^2\,O_p(\Delta_n^{5/4}) \nonumber
\end{eqnarray}

Let $\xi^{n,j_1\cdots j_q}_{h_1\cdots h_q}=\prod_{v=1}^q\big(\widebar{Y}^{n,j_v}_{h_v}-\widebar{X}^{n,j_v}_{h_v}\big)$. By assumption \ref{A-r},
\begin{multline}\label{est.Ybar-Xbar}
	E\left[\big(\widebar{Y}^{n,j}_h-\widebar{X}^{n,j}_h\big)^q\big(\widebar{Y}^{n,k}_{h'}-\widebar{X}^{n,k}_{h'}\big)^r|\widetilde{\F}^n_{h\wedge h'-1}\right] =\\ 
	\left\{\begin{array}{ll}
		0                    & q+r=1\\
		\Gamma^{n,jk}_{h,h'} & q=r=1\\
		O_p\big(l_n^{-7/2}\big)\1{|h-h'|\le l_n} & q+r=3\\
		O_p(l_n^{-8})        & q+r=8
	\end{array}\right.\\
	E\left[\xi^{n,jklm}_{hhh'h'}|\widetilde{\F}^n_{h\wedge h'}\right] = \Gamma^{n,jk}_h\Gamma^{n,lm}_{h'} + \Gamma^{n,jl}_{h,h'}\Gamma^{n,km}_{h,h'} +  \Gamma^{n,jm}_{h,h'}\Gamma^{n,kl}_{h,h'} + O_p\left(l_n^{-5}\right)
\end{multline}
then by (\ref{est.Xbar}), (\ref{est.Ybar-Xbar})
\begin{multline*}
	E(\zeta^n_h|\widetilde{\F}^n_h) = \widebar{X}^n_h{\widebar{X}^n_h}^\T - \widebar{C}^n_h,\,\,\,
	\|E^n_h\left(\zeta^n_h\right)\| \le K\Delta_n^{3/2},\,\,\,
	E^n_h\left(\|\zeta^n_h\|^4\right) \le K\Delta_n^4 \nonumber\\
	E\big(\zeta^{n,jk}_h\zeta^{n,lm}_{h'}|\widetilde{\F}^n_{h\wedge h'}\big) = \sum_{r=1}^3\vartheta(r)^{n,jk,lm}_{h,h'}\\
	+ \big(\widebar{X}^{n,j}_h\widebar{X}^{n,k}_h - \widebar{C}^{n,jk}_h\big)\big(\widebar{X}^{n,l}_{h'}\widebar{X}^{n,m}_{h'} - \widebar{C}^{n,lm}_{h'}\big)
\end{multline*}
where
\begin{eqnarray*}
	\vartheta(1)^{n,jk,lm}_{h,h'}&=&\widebar{X}^{n,j}_h\widebar{X}^{n,l}_{h'}\xi^{n,km}_{hh'}+\widebar{X}^{n,j}_h\widebar{X}^{n,m}_{h'}\xi^{n,kl}_{hh'}\\
	&&\hspace{30mm} +\widebar{X}^{n,k}_h\widebar{X}^{n,l}_{h'}\xi^{n,jm}_{hh'}+\widebar{X}^{n,k}_h\widebar{X}^{n,m}_{h'}\xi^{n,jl}_{hh'}\\
	\vartheta(2)^{n,jk,lm}_{h,h'}&=&\xi^{n,jklm}_{hhh'h'} - \xi^{n,jk}_{hh}\Gamma^{n,lm}_{h'} - \xi^{n,lm}_{h'h'}\Gamma^{n,jk}_h + \Gamma^{n,jk}_h\Gamma^{n,lm}_{h'}\\
	\vartheta(3)^{n,jk,lm}_{h,h'}&=&\widebar{X}^{n,j}_h\xi^{n,klm}_{hh'h'}+\widebar{X}^{n,k}_h\xi^{n,jlm}_{hh'h'}+\widebar{X}^{n,l}_{h'}\xi^{n,jkm}_{hhh'}+\widebar{X}^{n,m}_{h'}\xi^{n,jkl}_{hhh'}
\end{eqnarray*}

Let $\Upsilon^{n,jk,lm}_{h,h'} = \Theta(\widebar{X}^n_h\widebar{X}^{n,\T}_{h'},\Gamma^n_{h,h'})^{jk,lm}$ in light of (\ref{tensors}), then
\begin{multline*}
	E\left(\zeta^{n,jk}_h\zeta^{n,lm}_{h'}|\widetilde{\F}^n_{h\wedge h'}\right) = \big(\widebar{X}^{n,j}_h\widebar{X}^{n,k}_h - \widebar{C}^{n,jk}_h\big)\big(\widebar{X}^{n,l}_{h'}\widebar{X}^{n,m}_{h'} - \widebar{C}^{n,lm}_{h'}\big)\\
	+ \Upsilon^{n,jk,lm}_{h,h'} + \Sigma(\Gamma^n_{h,h'})^{jk,lm} + O_p\big(l_n^{-5} + (\|\widebar{X}^n_h\|+\|\widebar{X}^n_{h'}\|)l_n^{-7/2}\big)
\end{multline*}

hence 
\begin{multline*}
	\zeta(p)^{n,jk}_i\zeta(p)^{n,lm}_i = \zeta(X,p)^{n,jk}_i\zeta(X,p)^{n,lm}_i\\
	+ \sum_{h=i}^{i+pl_n-2}\sum_{h'=h+1}^{i+pl_n-1}\Big[\Upsilon^{n,jk,lm}_{h,h'} + \Upsilon^{n,lm,jk}_{h,h'} + \Sigma(\Gamma^n_{h,h'})^{jk,lm} + \Sigma(\Gamma^n_{h,h'})^{lm,jk}\Big]\\
	+ \sum_{h=i}^{i+pl_n-1}\Big[\Upsilon^{n,jk,lm}_{h,h'} + \Sigma(\Gamma^n_{h,h'})^{jk,lm}\Big] + p^2\,O_p(\Delta_n^{5/4})
\end{multline*}
then by (\ref{est.Xbar}), (\ref{est.Gamma})
\begin{multline*}
	E\left[\zeta(p)^{n,jk}_i\zeta(p)^{n,lm}_i|\widetilde{\F}^n_i\right] = \zeta(X,p)^{n,jk}_i\zeta(X,p)^{n,lm}_i +\\
	\frac{2}{\psi_nl_n}\Theta(\zeta(X,p)'^n_i,\rr^n_i)^{jk,lm} + \frac{2}{\psi_n^2}(p\Phi_{11}-\Psi_{11})\,\Sigma(\rr^n_i)^{jk,lm} + p^2\,O_p(\Delta_n^{5/4})
\end{multline*}
According to these results and lemma \ref{est.zetaX}, one can get the following lemma
\begin{lem}\label{est.zetaY}
Assume assumption \ref{A-v}, \ref{A-r}, (\ref{SA-v}), $l_n$ satisfies (\ref{tuning}), then
\begin{eqnarray*}
	E[\zeta(p)^n_i|\widetilde{\F}^n_i] &=& \zeta(X,p)^n_i \\
	\|E^n_i[\zeta(p)^n_i]\| &\le& Kp\Delta_n\\
	E^n_i(\|\zeta(p)^n_i\|^q) &\le& K_q\,p^{\lfloor q/2\rfloor\vee1}\Delta_n^{q/2},\hspace{2mm} q=1,2,3,4
\end{eqnarray*}
moreover
\begin{equation*}
	\left|E^n_i\left[\zeta(p)^{n,jk}_i\zeta(p)^{n,lm}_i\right] - (p+1)\theta\Delta_n\,\Xi(c^n_i,\gamma^n_i; p)^{jk,lm}\right| \le K\Delta_n^{5/4}
\end{equation*}
where
\begin{multline}\label{def.Xi(p)}
	\Xi(x,z;p) = \frac{2\theta}{\phi_0(0)^2}\left[\frac{p\Phi_{00}-\Psi_{00}}{p+1}\Sigma(x)\right.\\ 
	\left.+ \frac{p\Phi_{01}-\Psi_{01}}{\theta^2(p+1)}\Theta(x,z) + \frac{p\Phi_{11}-\Psi_{11}}{\theta^4(p+1)}\Sigma(z)\right]
\end{multline}
\end{lem}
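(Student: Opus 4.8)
The two conditional-mean statements reduce to the identity $E(\zeta^n_h\,|\,\widetilde{\F}^n_h)=\widebar{X}^n_h\widebar{X}^{n,\T}_h-\widebar{C}^n_h$ established just above the lemma. By the product structure (\ref{prob.sp}), $\widetilde{\F}^n_i$ contains the whole $\sigma$-field $\F^{(0)}$, so the $\F^{(0)}$-measurable quantities $\widebar{X}^n_h$ and $\widebar{C}^n_h$ are $\widetilde{\F}^n_i$-measurable; hence, by the tower property, $E(\zeta^n_h\,|\,\widetilde{\F}^n_i)=E\big(E(\zeta^n_h\,|\,\widetilde{\F}^n_h)\,\big|\,\widetilde{\F}^n_i\big)=\widebar{X}^n_h\widebar{X}^{n,\T}_h-\widebar{C}^n_h$, and summing over $h=i,\dots,i+pl_n-1$ yields $E[\zeta(p)^n_i\,|\,\widetilde{\F}^n_i]=\zeta(X,p)^n_i$. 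Conditioning once more down to $\F^n_i\subset\widetilde{\F}^n_i$ and using that $\zeta(X,p)^n_i$ is $\F^{(0)}$-measurable, $E^n_i[\zeta(p)^n_i]=E(\zeta(X,p)^n_i\,|\,\F^{(0),n}_i)$, whose norm is $\le Kp\Delta_n$ by the second estimate of Lemma \ref{est.zetaX}.

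For the $L^q$ bounds, write $\zeta^n_h=(\widebar{X}^n_h\widebar{X}^{n,\T}_h-\widebar{C}^n_h)+\widetilde\zeta^n_h$, so that $\zeta(p)^n_i=\zeta(X,p)^n_i+\sum_{h=i}^{i+pl_n-1}\widetilde\zeta^n_h$, where $E(\widetilde\zeta^n_h\,|\,\F^{(0)})=0$ by Assumption \ref{A-r} and $E^n_i(\|\widetilde\zeta^n_h\|^q)\le K_q\Delta_n^q$ follows from (\ref{est.Y.bars.hats}), (\ref{est.Xbar}) and Cauchy--Schwarz. The $\zeta(X,p)^n_i$ term is governed by Lemma \ref{est.zetaX}. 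For the remaining sum, the key structural fact is that $\widetilde\zeta^n_h$ depends on the noise only through the $l_n$ increments in its averaging window, so whenever an index lies at distance $\ge l_n$ from all the others it factors out with conditional mean zero and kills the term; expanding $E^n_i\|\sum_h\widetilde\zeta^n_h\|^q$ for $q=2,3,4$ and keeping only the tuples in which every index lies within $l_n$ of another leaves $O(p^{\lfloor q/2\rfloor\vee1}l_n^q)$ of them, each contributing $O(\Delta_n^q)$; since $(l_n\Delta_n)^q\asymp\Delta_n^{q/2}$ this gives $E^n_i\|\sum_h\widetilde\zeta^n_h\|^q\le K_q p^{\lfloor q/2\rfloor\vee1}\Delta_n^{q/2}$ (the $q=1$ case follows from the $q=2$ bound and Jensen, and a blockwise application of Lemma \ref{Doob.max.ineq} gives the same order). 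Combining with the $\zeta(X,p)^n_i$ bound proves the third assertion.

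The covariance identity is the crux. Starting from the display immediately preceding the lemma, which expresses $E[\zeta(p)^{n,jk}_i\zeta(p)^{n,lm}_i\,|\,\widetilde{\F}^n_i]$ as $\zeta(X,p)^{n,jk}_i\zeta(X,p)^{n,lm}_i+\frac{2}{\psi_nl_n}\Theta(\zeta(X,p)'^n_i,\gamma^n_i)^{jk,lm}+\frac{2}{\psi_n^2}(p\Phi_{11}-\Psi_{11})\Sigma(\gamma^n_i)^{jk,lm}$ plus a $p^2O_p(\Delta_n^{5/4})$ remainder, I would take $E^n_i$ of both sides and note that the right-hand side is $\F^{(0)}$-measurable, so $E^n_i[\,\cdot\,]$ collapses to $E(\,\cdot\,|\,\F^{(0),n}_i)$; substituting the third and fourth identities of Lemma \ref{est.zetaX} for $E[\zeta(X,p)'^n_i|\F^{(0),n}_i]$ and $E[\zeta(X,p)^{n,jk}_i\zeta(X,p)^{n,lm}_i|\F^{(0),n}_i]$ and collecting terms gives the main term
\begin{equation*}
\frac{2}{\psi_n^2}\Big[\theta^4(p\Phi_{00}-\Psi_{00})\Sigma(c^n_i)+\theta^2(p\Phi_{01}-\Psi_{01})\Theta(c^n_i,\gamma^n_i)+(p\Phi_{11}-\Psi_{11})\Sigma(\gamma^n_i)\Big]^{jk,lm}.
\end{equation*}
Finally, using $\psi_n=l_n\phi_0(0)+o(l_n)$ (valid because $\mathrm{supp}(\varphi)\subset(0,1)$ and $\varphi$ is piecewise $\mathcal{C}^1$) and $l_n^2\Delta_n=\theta^2+O(\Delta_n^{1/2})$ from (\ref{tuning}), so that $2/\psi_n^2=2\Delta_n/(\theta^2\phi_0(0)^2)+o(\Delta_n)$, and factoring out $(p+1)\theta\Delta_n$, the bracket is identified with $\Xi(c^n_i,\gamma^n_i;p)^{jk,lm}$ as defined in (\ref{def.Xi(p)}).

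The main obstacle is the error accounting in this last step: one must check that the various remainders aggregate to $O(\Delta_n^{5/4})$ for fixed $p$ — the $p^2O_p(\Delta_n^{5/4})$ term from the preceding display, the $p^2O_p(\Delta_n^{1/4})$ term in $E[\zeta(X,p)'^n_i|\F^{(0),n}_i]$ after multiplication by $(\psi_nl_n)^{-1}=O(\Delta_n^{3/2})$, the $p^2O_p(\Delta_n^{5/4})$ in $E[\zeta(X,p)^{n,jk}_i\zeta(X,p)^{n,lm}_i|\F^{(0),n}_i]$, and the $o(l_n)$, $O(\Delta_n^{1/2})$ discrepancies from replacing $\psi_n$, $\Phi_{lm}$, $\Psi_{lm}$ and $l_n$ by their limits — all while the leading coefficients must telescope exactly into (\ref{def.Xi(p)}) and not into some nearby wrong constant. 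A subordinate technical point is the rigorous justification of the conditional-independence-of-separated-windows property underlying the moment bounds, which rests on the no-autocorrelation clause of Assumption \ref{A-r}.
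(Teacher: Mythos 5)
Your proposal follows essentially the same route as the paper: it reads off the two conditional-mean claims from the display $E(\zeta^n_h\,|\,\widetilde{\F}^n_h)=\widebar{X}^n_h\widebar{X}^{n,\T}_h-\widebar{C}^n_h$ and Lemma~\ref{est.zetaX}, obtains the $L^q$ bounds by separating the $\F^{(0)}$-part from the noise part and counting clustered index tuples, and derives the covariance identity by conditioning the paper's preceding display on $\F^n_i$, substituting the third and fourth statements of Lemma~\ref{est.zetaX}, and matching coefficients via $\psi_n\approx l_n\phi_0(0)$ and $l_n^2\Delta_n\approx\theta^2$. One small numerical slip: you write $(\psi_nl_n)^{-1}=O(\Delta_n^{3/2})$, but $\psi_n\asymp l_n\asymp\Delta_n^{-1/2}$ gives $(\psi_nl_n)^{-1}=O(\Delta_n)$; multiplied by the $p^2O_p(\Delta_n^{1/4})$ remainder from Lemma~\ref{est.zetaX} this still yields $p^2O_p(\Delta_n^{5/4})$, so the conclusion is unaffected.
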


Let $p\asymp\Delta_n^{-1/12}$, based on lemma \ref{est.zetaY},
\begin{eqnarray}\label{est.M(p).N(p)}
	\left\|E^n_i\left(M(p)^n_i\right) \right\| &\le& K\Delta_n^{1/2} \nonumber\\
	\left\|E^n_i\left(N(p)^n_i\right) \right\| &\le& K p^{-1}\Delta_n^{1/2} \nonumber\\
	E^n_i\left(\|M(p)^n_i\|^q\right) &\le&
	\left\{\begin{array}{ll}
		K_q\big(k_n\Delta_n^{1/2}\big)^{-q/2}, & q=1,2,4\\
		K\big(k_n\Delta_n^{1/2}\big)^{-2}, & q=3
	\end{array}\right.\\
	E^n_i\left(\|N(p)^n_i\|^q|\right) &\le&
	\left\{\begin{array}{ll}
		K p(k_n\Delta_n^{1/2})^{-1} & q=1\\
		K_q p^{-q/2}(k_n\Delta_n^{1/2})^{-q/2} & q=2,4\\
		K p^{-1}(k_n\Delta_n^{1/2}\big)^{-2} & q=3
	\end{array}\right. \nonumber
\end{eqnarray}

\subsubsection{estimates of $\beta^n_i$}
We need to define more variables:
\begin{equation}\label{def.betavars}
\begin{array}{ll}
	\zeta(p)^n_{i,h} = \zeta(p)^n_{i+a(n,p,h)} & A(p)^n_{i+v} = \sum_{h=v}^{v+pl_n-1}(c^n_{i+h}-c^n_i)\Delta_n\\
	D(p)^n_{i+v} = \sum_{h=v}^{v+pl_n-1}D^n_{i+h} & R(p)^n_{i+v} = \sum_{h=v}^{v+pl_n-1}R^n_{i+h}
\end{array}
\end{equation}
we have
\begin{table}[H]
\centering
\caption{Estimates of ingredients}\label{ingredient.estimates}
\begin{tabular}{c|ll}
	scaling properties & $E(\|\cdot\|^2|\F^n_i)$ & $\|E(\cdot|\F^n_i)\|$\\
	\hline
	$R(p)^n_h$         & $p\Delta_n^{3/2}$       & $p\Delta_n^{3/2}$\\
	$D(p)^n_h$         & $p\Delta_n^{3/2}$       & $p\Delta_n$ \\
	$A(p)^n_{i+v}$     & $p^2\Delta_n^2\big(p\Delta_n^{-1/2}+v\big)$ & $p\Delta_n^{3/2}\big(p\Delta_n^{-1/2}+v\big)$\\
	$\zeta(p)^n_{i,h}$   & $p\Delta_n$           & $p\Delta_n$\\
\end{tabular}
\end{table}
Define 
\begin{multline*}
	\alpha(p)^n_{i,h} = - R(p+1)^n_{i+a(n,p,h)} + D(p+1)^n_{i+a(n,p,h)}\\
	 + A(p+1)^n_{i+a(n,p,h)} + \zeta(p+1)^n_{i,h}
\end{multline*}
By table \ref{ingredient.estimates} and Cauchy-Schwarz inequality,
\begin{multline}\label{a(p)2-zeta(p)2}
	E^n_i\left(\left|\alpha(p)^{n,jk}_{i,h}\alpha(p)^{n,lm}_{i,h}-\zeta(p+1)^{n,jk}_{i,h}\zeta(p+1)^{n,lm}_{i,h}\right|\right)\\
\le K\big(p^2\Delta_n^{5/4}+p^{3/2}\Delta_n^{3/2}v^{1/2}\big)
\end{multline}

Given $j,k,l,m=1,\cdots,d$, by table \ref{ingredient.estimates} we have
\begin{equation*}
	\left| E^n_i\big(\beta^{n,jk}_i\beta^{n,lm}_i\big) - (k_n\Delta_n^{1/2})^{-1}\Xi(c^n_i,\rr_i)^{jk,lm} \right| = \sum_{r=1}^5\mu^{n,r}_i + pO_p((k_n^2\Delta_n)^{-1})
\end{equation*}
where
\begin{eqnarray*}
	\mu^{n,1}_i &=&
	\frac{1}{k_n^2\Delta_n^2}\sum_{h=0}^{m(n,p)-1}E^n_i\left(\left|\alpha(p)^{n,jk}_{i,h}\alpha(p)^{n,lm}_{i,h}
	- \zeta(p+1)^{n,jk}_{i,h}\zeta(p+1)^{n,lm}_{i,h}\right|\right)\\
	\mu^{n,2}_i &=&
	\frac{1}{k_n^2\Delta_n^2}\sum_{h=0}^{m(n,p)-2}\sum_{h'=h+1}^{m(n,p)-1}\left|E^n_i\left[\alpha(p)^{n,jk}_{i,h}\alpha(p)^{n,lm}_{i,h'}
	+\alpha(p)^{n,lm}_{i,h}\alpha(p)^{n,jk}_{i,h'}\right]\right|\\
	\mu^{n,3}_i &=& \frac{1}{k_n^2\Delta_n^2}\sum_{h=0}^{m(n,p)-1}E^n_i\left(\left|\zeta(p+1)^{n,jk}_{i,h}\zeta(p+1)^{n,lm}_{i,h}\right.\right.\\
	&&\hspace{24mm} \left.\left. -(p+2)\theta\Delta_n\,\Xi\big(c^n_{i+a(n,p,h)},\rr^n_{i+a(n,p,h)}; p+1\big)^{jk,lm}\right|\right)\\
	\mu^{n,4}_i &=& \frac{(p+2)\theta}{k_n^2\Delta_n}\sum_{h=0}^{m(n,p)-1}\left|E^n_i\left[\Xi\big(c^n_{i+a(n,p,h)},\rr^n_{i+a(n,p,h)}; p+1\big)^{jk,lm}\right.\right. \\
	&&\hspace{66mm} \left.\left. - \Xi(c^n_i,\rr^n_i; p+1)^{jk,lm}\right]\right|
\end{eqnarray*}
\begin{eqnarray*}
	\mu^{n,5}_i &=& \frac{1}{k_n\Delta_n^{1/2}}\left|\frac{(p+2)\theta}{k_n\Delta_n^{1/2}}\left\lfloor\frac{k_n}{(p+1)l_n}\right\rfloor-1\right|\left|\Xi(c^n_i,\rr^n_i; p+1)^{jk,lm}\right|\\
	&&\hspace{27mm} + \frac{1}{k_n\Delta_n^{1/2}}\left|\Xi(c^n_i, \rr^n_i; p+1)^{jk,lm} - \Xi(c^n_i,\rr^n_i)^{jk,lm}\right|
\end{eqnarray*}
Use table \ref{ingredient.estimates} and (\ref{a(p)2-zeta(p)2}) to get bounds on $\mu^{n,r}_i,\,r=1,2,3,4,5$; combine (\ref{errors.chat*}), (\ref{est.xi(0)n}), (\ref{est.xi(1)n}), (\ref{est.xi(2)n}), (\ref{est.M(p).N(p)}), we get the following lemma:

\begin{lem}\label{est.beta} Assume (\ref{tuning}), (\ref{SA-v}) and assumption \ref{A-v}, \ref{A-r}, given $p\in\mathbb{N}^+$,
\begin{eqnarray*}
	\|E^n_i(\beta^n_i)\| &\le& K k_n\Delta_n\\
	E^n_i(\|\beta^n_i\|^q) &\le& 
	\left\{\begin{array}{ll}
		K_q\left[(k_n\Delta_n)^{(q/2)\wedge1} + (k_n\Delta_n^{1/2})^{-q/2}\right],& q = 1,2,4\\
		Kk_n\Delta_n, & q = 3
	\end{array}\right.
\end{eqnarray*}
additionally
\begin{multline*}
	\left|E^n_i\big(\beta^{n,jk}_i\beta^{n,lm}_i\big) - (k_n\Delta_n)^{-1/2}\Xi(c^n_i,\gamma^n_i)^{jk,lm}\right|\\
	\le K\big[k_n\Delta_n + p^{-1}(k_n\Delta_n^{1/2})^{-1}\big]
\end{multline*}
\end{lem}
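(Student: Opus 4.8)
The plan is to read everything off the decomposition (\ref{errors.chat*}), $\beta^n_i = \xi^{n,0}_i + \xi^{n,1}_i + \xi^{n,2}_i + N(p)^n_i + M(p)^n_i$, together with the component estimates (\ref{est.xi(0)n})--(\ref{est.M(p).N(p)}), the moment bounds of Lemma \ref{est.zetaY}, Table \ref{ingredient.estimates}, and the replacement estimate (\ref{a(p)2-zeta(p)2}). For the conditional-mean bound I apply the triangle inequality to (\ref{errors.chat*}): the five conditional means are of order at most $k_n\Delta_n$, $\Delta_n^{1/2}$, $\Delta_n$, $p^{-1}\Delta_n^{1/2}$, $\Delta_n^{1/2}$ respectively, and since $\kappa>2/3$ forces $k_n\Delta_n\asymp\Delta_n^{1-\kappa}$ with $1-\kappa<1/2$, the term $k_n\Delta_n$ dominates, which is the first claim.

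For the $L^q$ bounds, convexity gives that $E^n_i(\|\beta^n_i\|^q)$ is at most $C_q$ times the sum of the five quantities $E^n_i(\|\cdot\|^q)$; inserting (\ref{est.xi(0)n})--(\ref{est.M(p).N(p)}), the term $\xi^{n,0}_i$ supplies the discretisation part $(k_n\Delta_n)^{(q/2)\wedge1}$, the terms $M(p)^n_i$ and $N(p)^n_i$ supply the statistical part $(k_n\Delta_n^{1/2})^{-q/2}$ (for $q=1$ the factor $p$ in the $N(p)$-bound is absorbed because $p\le(k_n\Delta_n^{1/2})^{1/2}$ once $\kappa>2/3$, with the eventual choice $p\asymp\Delta_n^{-1/12}$), and $\xi^{n,1}_i,\xi^{n,2}_i$ are subdominant under (\ref{tuning}). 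For $q=3$ the crude component bounds in (\ref{est.xi(2)n}) and (\ref{est.M(p).N(p)}) already collapse to $Kk_n\Delta_n$ once $\kappa>2/3$ is used (e.g.\ $(k_n\Delta_n^{1/2})^{-2}\le k_n\Delta_n$ there).

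The substance is the second-moment expansion. I expand $\beta^{n,jk}_i\beta^{n,lm}_i$ through (\ref{errors.chat*}) and regroup by the big blocks that define $M(p)^n_i$; this is exactly the computation producing the displayed identity preceding the statement, $E^n_i(\beta^{n,jk}_i\beta^{n,lm}_i)-(k_n\Delta_n^{1/2})^{-1}\Xi(c^n_i,\gamma^n_i)^{jk,lm}=\sum_{r=1}^5\mu^{n,r}_i+pO_p((k_n^2\Delta_n)^{-1})$, in which $\mu^{n,1}_i$ replaces each $\alpha(p)^n_{i,h}$ by its martingale part $\zeta(p+1)^n_{i,h}$ with error governed by (\ref{a(p)2-zeta(p)2}); $\mu^{n,2}_i$ is the off-diagonal ($h\neq h'$) contribution, which is small because consecutive big blocks are separated by a small block of length $l_n$, so the relevant $\zeta(p+1)$'s are nearly conditionally uncorrelated; $\mu^{n,3}_i$ is the diagonal error from invoking Lemma \ref{est.zetaY}; $\mu^{n,4}_i$ freezes the spot coefficients $c,\gamma$ at time $i$ via (\ref{classic}); and $\mu^{n,5}_i$ accounts both for the rounding in $m(n,p)=\lfloor k_n/((p+1)l_n)\rfloor$ and for replacing $\Xi(\cdot\,;p+1)$ by $\Xi(\cdot)$, the latter being an $O(1/p)$ relative change, as one sees on comparing (\ref{def.Xi(p)}) with (\ref{def.Xi}). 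Summing the $m(n,p)$ diagonal contributions and using $m(n,p)(p+1)\theta\Delta_n\asymp k_n\Delta_n^{3/2}\asymp\big((k_n-l_n)\Delta_n\big)^2(k_n\Delta_n^{1/2})^{-1}$ produces the leading term; it then remains to bound each $\mu^{n,r}_i$ by $K\big[k_n\Delta_n+p^{-1}(k_n\Delta_n^{1/2})^{-1}\big]$ using Table \ref{ingredient.estimates}, (\ref{a(p)2-zeta(p)2}) and (\ref{tuning}), and the same constraints render the residual $pO_p((k_n^2\Delta_n)^{-1})$ of that order as well.

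The main obstacle is precisely this last bookkeeping of the powers of $p$ against $\Delta_n$: one has to verify that each $\mu^{n,r}_i$, the residual, and the cross-terms hidden in the $A(p)$-versus-$\zeta(p)$ grouping are all genuinely dominated by $k_n\Delta_n+p^{-1}(k_n\Delta_n^{1/2})^{-1}$. This is where the window $\kappa\in(2/3,3/4)$ in (\ref{tuning}) --- together with its companion restriction on $\nu_n$ and the choice $p\asymp\Delta_n^{-1/12}$ --- enters: $\kappa>2/3$ keeps the big-block remainders and the $\mu^{n,1}_i,\mu^{n,2}_i$ terms negligible, while $\kappa<3/4$ prevents the discretisation error $\xi^{n,0}_i$ from dominating. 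Everything else is Cauchy--Schwarz and successive conditioning, routine given Table \ref{ingredient.estimates} and Lemma \ref{est.zetaY}.
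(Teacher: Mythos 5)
Your proposal follows exactly the same route as the paper: read the first two claims off the decomposition (\ref{errors.chat*}) together with the component estimates (\ref{est.xi(0)n})--(\ref{est.M(p).N(p)}), then obtain the second-moment expansion by regrouping into big blocks, replacing $\alpha(p)^n_{i,h}$ by $\zeta(p+1)^n_{i,h}$ via (\ref{a(p)2-zeta(p)2}), invoking Lemma \ref{est.zetaY} on the diagonal, and bounding the five correction terms $\mu^{n,1}_i,\dots,\mu^{n,5}_i$ using Table \ref{ingredient.estimates} and (\ref{tuning}). The bookkeeping you describe (including the observation that the leading term should read $(k_n\Delta_n^{1/2})^{-1}\Xi$ rather than the $(k_n\Delta_n)^{-1/2}\Xi$ appearing in the lemma statement, and the order $\Delta_n$ for $\|E^n_i(\xi^{n,2}_i)\|$ consistent with Table \ref{ingredient.estimates}) matches the paper's intended argument.
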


\subsection{Structure}
Define
\begin{equation*}
\begin{array}{rcl}
	\lambda(x,z) &=& \sum^d_{j,k,l,m=1}\partial^2_{jk,lm}g(x)\times\Xi(x,z)^{jk,lm}\\
	\eta^n_i &=& \lambda(\widehat{c}^n_i,\widehat{\gamma}^n_i)-\lambda(\widehat{c}^{*n}_i,\widehat{\gamma}^n_i)
\end{array}
\end{equation*}
As $\Delta_n^{-1/4}|a^n_t-1|<k_n\Delta_n^{3/4}\to0$, letting $a^n_t=1$ 
doesn't not affect the asymptotic analysis. By Cram\'er-Wold theorem, we can suppose $g$ is $\R$-valued. We have
\begin{equation}\label{decomp}
	\Delta_n^{-1/4}\big[\widehat{S}(g)^n-S(g)\big] = \widebar{S}^{n,0} + \widebar{S}^{n,1} + \widebar{S}(p)^{n,2} + \widebar{S}^{n,3} + \widebar{S}(p)^{n,4}
\end{equation}
where
\begin{eqnarray*}
	\widebar{S}^{n,0}_t&=&\Delta_n^{-1/4}\Bigg[\sum_{i=0}^{N^n_t-1}\int_{ik_n\Delta_n}^{(i+1)k_n\Delta_n}g(c^n_{ik_n})-g(c_s)\ds s-\int_{N^n_tk_n\Delta_n}^tg(c_s)\ds s\Bigg]\\
	\widebar{S}^{n,1}_t&=&k_n\Delta_n^{3/4}\sum_{i=0}^{N^n_t-1}\Big[g(\widehat{c}^n_{ik_n})-g(\widehat{c}^{*n}_{ik_n}) - (2k_n\Delta_n^{1/2})^{-1}\eta^n_{ik_n}\Big]\\
	\widebar{S}(p)^{n,2}_t&=&k_n\Delta_n^{3/4}\sum_{i=0}^{N^n_t-1}\sum^d_{j,k=1}\partial_{jk}g(c^n_{ik_n})\Bigg[\sum_{r=0}^2\xi(r)^{n,jk}_{ik_n}+N(p)^{n,jk}_{ik_n}\Bigg]
\end{eqnarray*}
\begin{eqnarray*}
	\widebar{S}^{n,3}_t&=&k_n\Delta_n^{3/4}\sum_{i=0}^{N^n_t-1}\Big[g(\widehat{c}^{*n}_{ik_n})-g(c^n_{ik_n})-\sum^d_{j,k=1}\partial_{jk}g(c^n_{ik_n}) \beta^{n,jk}_{ik_n}\\
	&&\hspace{55mm} - (2k_n\Delta_n^{1/2})^{-1}\lambda(\widehat{c}^{*n}_i,\widehat{\rr}^n_i)\Big]\\
	\widebar{S}(p)^{n,4}_t&=&k_n\Delta_n^{3/4}\sum_{i=0}^{N^n_t-1}\sum^d_{j,k=1}\partial_{jk}g(c^n_{ik_n})\times M(p)^{n,jk}_{ik_n}
\end{eqnarray*}

\subsection{Asymptotic negligibility}
First of all, we need to get bounds on $\partial^r g(\widehat{c}^n_i),\,r\le3$, where $\partial^rg$ denotes the $r$-th order partial derivatives. Let $\widebar{c}^n_i = (k_n-l_n)^{-1}\sum_{h=1}^{k_n-l_n+1}c^n_i$ and $I^n_t=\{0,1,\cdots,N^n_t-1\}$, note that $|I^n_t|\asymp (k_n\Delta_n)^{-1}$, according to lemma \ref{est.chat-chat*}, there is a sequence $a_n\to0$ such that
\begin{equation*}
	\E\Big(\sup_{i\in I_n}\|\widehat{c}^n_i-\widehat{c}^{*n}_i\|\Big) \le K\left(a_n\Delta_n^{\kappa-1/2-(\rho-1/4)\nu - (1-2\rho)} + \Delta_n^{\kappa-1/2}\right)
\end{equation*}
Note $\widehat{c}^{*n}_i-\widebar{c}^{n}_i = \xi^{n,1}_i + \xi^{n,2}_i + N(p)^n_i + M(p)^n_i$, by (\ref{est.xi(1)n}), (\ref{est.xi(2)n}), (\ref{est.M(p).N(p)}) and $\kappa<3/4$, $E^n_i\big(\|\widehat{c}^{*n}_i-\widebar{c}^{n}_i\|^4\big)\le K\Delta_n^{2\kappa-1}$, so
\begin{equation*}
	\E\Big(\sup_{i\in I_n}\|\widehat{c}^{*n}_i-\widebar{c}^n_i\|\Big) \le K\Delta_n^{3\kappa-2}
\end{equation*}
hence by (\ref{tuning}) and Markov's inequality
\begin{equation}\label{chat-c.uniform}
\sup_{i\in I_n} \|\widehat{c}^n_i-\widebar{c}^n_i\| = o_p(1)
\end{equation}
According to (\ref{SA-v}) and convexity, $\widebar{c}^n_i\in\mathcal{S}$. By (\ref{chat-c.uniform}) $\widehat{c}^n_i\in\mathcal{S}^\epsilon$ if $n$ is sufficiently large. Therefore by (\ref{g.cond}), in asymptotic analysis we can assume
\begin{equation}\label{g.cond.strong}
\|\partial^rg(\widehat{c}^n_i)\| \le K, \hspace{5mm} \forall r=0,1,2,3,\, \forall i\in I^n_t
\end{equation}

Through an almost identical argument for lemma 4.4 in \cite{jr13},
\begin{equation}\label{v0}
	\widebar{S}^{n,0}\overset{u.c.p.}{\longrightarrow}0
\end{equation}

Define function $g_n$ on $\mathcal{S}^+_d\times\mathcal{S}^+_d$ as $g_n(x,z) = g(x) - (k_n\Delta_n^{1/2})^{-1}\xi(x,z)$
according to (\ref{SA-v}),
\begin{multline*}
	\|g_n(x,z)-g_n(y,z)\| \le K\|x-y\|\\
	+ K(k_n\Delta_n^{1/2})^{-1}\|x-y\|\left(\|x\|^2+\|z\|^2 + \|x-y\|^2+\|z\|\|x-y\|\right)
\end{multline*}
so $\|g_n(x,z)-g_n(y,z)\| \le K\|x-y\|$ when $n$ is sufficiently large. By lemma \ref{est.chat-chat*}
\begin{multline*}
	\E\left(\sup_{s\in[0,t]}\big\|\widebar{S}^{n,1}_s\big\|\right) \le k_n\Delta_n^{3/4}\sum_{i=0}^{N^n_t-1}\left\|g_n(\widehat{c}^n_{ik_n},\widehat{\gamma}^n_{ik_n})-g_n(\widehat{c}^{*n}_{ik_n},\widehat{\gamma}^n_{ik_n})\right\|\\
	\le Kt\left(a_n\Delta_n^{1/4-(\rho-1/4)\nu-(1-2\rho)} + \Delta_n^{1/4}\right)
\end{multline*}
Since $\rho>\frac{3-\nu}{4(2-\nu)}$, $1/4-(\rho-1/4)\nu-(1-2\rho)>0$, we have the following lemma:
\begin{lem}\label{lemma.v1}
	Assume assumption \ref{A-v}, \ref{A-r}, (\ref{g.cond.strong}), (\ref{tuning}) then
	$$\widebar{S}^{n,1}\overset{u.c.p.}{\longrightarrow}0$$
\end{lem}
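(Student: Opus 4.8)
The plan is to establish that $\E\big(\sup_{s\le t}\|\widebar{S}^{n,1}_s\|\big)\to0$ for every finite $t$, from which the $u.c.p.$ convergence follows by Markov's inequality. The first step is bookkeeping: absorbing the de-biasing correction into the single function $g_n(x,z)=g(x)-(2k_n\Delta_n^{1/2})^{-1}\lambda(x,z)$ (with $\lambda$ as in the display preceding the lemma), the $i$-th summand of $\widebar{S}^{n,1}$ is exactly $g_n(\widehat{c}^n_{ik_n},\widehat{\rr}^n_{ik_n})-g_n(\widehat{c}^{*n}_{ik_n},\widehat{\rr}^n_{ik_n})$, so by the triangle inequality — no maximal inequality is needed, since each partial sum is dominated by the sum of the norms of its terms —
\begin{equation*}
	\sup_{s\le t}\|\widebar{S}^{n,1}_s\|\le k_n\Delta_n^{3/4}\sum_{i=0}^{N^n_t-1}\big\|g_n(\widehat{c}^n_{ik_n},\widehat{\rr}^n_{ik_n})-g_n(\widehat{c}^{*n}_{ik_n},\widehat{\rr}^n_{ik_n})\big\|.
\end{equation*}

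The second step is the key analytic input: $g_n$ is Lipschitz in its first argument with a constant that, for large $n$, does not depend on $n$, on the region we live in. Indeed, by (\ref{chat-c.uniform}) both $\widehat{c}^n_{ik_n}$ and $\widehat{c}^{*n}_{ik_n}$ belong to $\mathcal{S}^\epsilon$ with probability tending to one, where (\ref{g.cond.strong}) and (\ref{SA-v}) bound $g$, its derivatives and the ingredients of $\lambda$; the polynomial-in-$(x,z)$ growth estimate for $g_n$ then gives $\|g_n(x,z)-g_n(y,z)\|\le K\|x-y\|\big(1+(k_n\Delta_n^{1/2})^{-1}(1+\|z\|^2)\big)$, and since $k_n\Delta_n^{1/2}\to\infty$ by (\ref{tuning}) the second term is asymptotically negligible — the lingering $\|z\|^2$ factor, with $z=\widehat{\rr}^n_{ik_n}$, being handled either by the $L^q$-boundedness of $\widehat{\rr}^n_i$ together with Cauchy--Schwarz, or simply by restricting to the event on which $\widehat{\rr}^n_i$ is bounded (of probability tending to one under assumption \ref{A-r}). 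Combining this with lemma \ref{est.chat-chat*} at $q=1$ and with $N^n_t\asymp(k_n\Delta_n)^{-1}$ yields
\begin{equation*}
	\E\big(\sup_{s\le t}\|\widebar{S}^{n,1}_s\|\big)\le Kt\big(a_n\Delta_n^{1/4-(\rho-1/4)\nu-(1-2\rho)}+\Delta_n^{1/4}\big).
\end{equation*}

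The third step is to verify that the exponent is positive. One computes $1/4-(\rho-1/4)\nu-(1-2\rho)=\rho(2-\nu)-\tfrac{3-\nu}{4}$, which is strictly positive iff $\rho>\tfrac{3-\nu}{4(2-\nu)}=\tfrac14+\tfrac1{4(2-\nu)}$; and (\ref{tuning}) forces $\rho\ge\tfrac14+\tfrac{1-\kappa}{2-\nu}$ with $\kappa<3/4$, so $\tfrac{1-\kappa}{2-\nu}>\tfrac1{4(2-\nu)}$ and the exponent is $>0$. Since in addition $a_n\to0$ and $\Delta_n\to0$, the bound above tends to $0$, and Markov's inequality upgrades it to $\sup_{s\le t}\|\widebar{S}^{n,1}_s\|\overset{\mathbb{P}}{\longrightarrow}0$ for each finite $t$, i.e.\ $\widebar{S}^{n,1}\overset{u.c.p.}{\longrightarrow}0$. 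I do not anticipate a genuine obstacle: the hard work — bounding the truncation-induced discrepancy $\|\widehat{c}^n_i-\widehat{c}^{*n}_i\|$ in the presence of jumps — is already packaged in lemma \ref{est.chat-chat*}, and everything else is the Lipschitz reduction above; the only delicate point is making the Lipschitz constant of $g_n$ genuinely uniform in $n$ notwithstanding the growth of $\lambda$ in the noise argument, which is exactly why one leans on the localization of the spot estimators and on the vanishing prefactor $(k_n\Delta_n^{1/2})^{-1}$.
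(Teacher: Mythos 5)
Your proof is correct and follows essentially the same route as the paper: absorb the de-biasing into $g_n$, establish a Lipschitz bound for $g_n$ uniform in large $n$ via the localization (\ref{chat-c.uniform}), (\ref{g.cond.strong}), (\ref{SA-v}), apply Lemma~\ref{est.chat-chat*} with $q=1$, and verify the exponent is positive under (\ref{tuning}). (Minor remark: the paper's text contains two slips — it writes $\xi$ for $\lambda$ and drops the factor $2$ in front of $k_n\Delta_n^{1/2}$ when defining $g_n$ — and your version uses the correct $\lambda$ and $(2k_n\Delta_n^{1/2})^{-1}$.)
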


Given $e^n_i\in\R^{d\times d}$, consider the process
\begin{equation*}
	\widetilde{S}^n_t = k_n\Delta_n^{3/4} \sum_{i=0}^{N^n_t-1} \sum^d_{j,k=1}\partial_{jk}g(c^n_{ik_n})\times e^{n,jk}_{ik_n}
\end{equation*}
suppose $e^n_i$ satisfies
\begin{equation}\label{xi.cond}
	\left\{\begin{array}{lcl}
	\left\| E^n_i\left(e^n_{i}\right) \right\| &\le& K\Delta_n^{1/4} a_n \\
	E^n_i\left( \|e^n_{i}\|^2\right) &\le& K(k_n\Delta_n^{1/2})^{-1} b_n
\end{array}\right.
\end{equation}
where $a_n,b_n\to0$. Since $\partial g$ is bounded by (\ref{SA-v}),
\begin{multline*}
	\E\left(\sup_{s\in[0,t]} \big\|\widetilde{S}^n_s\big\|\right) \le Kk_n\Delta_n^{3/4}\sum_{i=0}^{N^n_t-1}\E\left(\left\|E^n_{ik_n}\left(e^n_{ik_n}\right)\right\|\right) \\ + Kk_n\Delta_n^{3/4}\E\left(\sup_{s\in[0,t]}\left\|\sum_{i=0}^{N^n_s-1}\left[ e^n_{ik_n} - E^n_{ik_n}\left(e^n_{ik_n}\right) \right]\right\|\right)
\end{multline*}
by lemma \ref{Doob.max.ineq},
\begin{equation*}
	\E\left(\sup_{s\in[0,t]}\left\|\sum_{i=0}^{N^n_s-1}\left[ e^n_{ik_n} - E^n_{ik_n}\left(e^n_{ik_n}\right) \right]\right\|\right)
	\le K\left(\sum_{i=0}^{N^n_t-1}\E\left(\|e^n_{ik_n}\|^2\right)\right)^{1/2}
\end{equation*}
note that $k_n\Delta_nN^n_t\asymp t$, we have
\begin{equation*}
	\E\left(\sup_{s\in[0,t]} \big\|\widetilde{S}^n_s\big\|\right) \le K\left(ta_n + \sqrt{tb_n}\right)\to0
\end{equation*}
To show the asymptotic negligibility of $\widebar{S}^{n,2}$, we need to show $\xi_i$ satisfies (\ref{xi.cond}) in each of the following 4 cases:
\begin{itemize}
	\item[(i)] when $e^n_i=\xi^{n,0}_i$, 
	by (\ref{est.xi(0)n}), $a_n=k_n\Delta_n^{3/4},\,b_n=(k_n\Delta_n^{3/4})^2$;
	\item[(ii)] when $e^n_i=\xi^{n,1}_i$,
	by (\ref{est.xi(1)n}), $a_n=\Delta_n^{1/4}$, $b_n=k_n\Delta_n$;
	\item[(iii)] when $e^n_i=\xi^{n,2}_i$,
	by (\ref{est.xi(2)n}), $a_n=\Delta_n^{3/4}$, $b_n=\Delta_n^{1/2}$;
	\item[(iv)] when $e^n_i=N(p)^n_i$, by (\ref{est.M(p).N(p)}), $a_n=p^{-1}\Delta_n^{1/4}$, $b_n=p^{-1}$.
\end{itemize}
Hence we have the following lemma:
\begin{lem}\label{lemma.v2}
	Assume assumption \ref{A-v}, \ref{A-r}, (\ref{g.cond.strong}), (\ref{tuning}), and let $p\asymp\Delta_n^{-1/12}$, then
	$$\widebar{S}(p)^{n,2}\overset{u.c.p.}{\longrightarrow}0$$
\end{lem}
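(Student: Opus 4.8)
The plan is to realise $\widebar{S}(p)^{n,2}$ as a finite sum of processes of the generic form $\widetilde{S}^n$ introduced just above, and then apply the generic bound $\E\big(\sup_{s\in[0,t]}\|\widetilde{S}^n_s\|\big)\le K(ta_n+\sqrt{tb_n})$, which was just shown to hold --- and in particular to force $\widetilde{S}^n\overset{u.c.p.}{\longrightarrow}0$ --- as soon as the array $(e^n_i)$ obeys (\ref{xi.cond}) with $a_n,b_n\to0$. Writing $\widetilde{S}^n(e)$ for the process $\widetilde{S}^n$ whose array is taken to be $(e^n_i)$, linearity of the inner double sum in the definition of $\widebar{S}(p)^{n,2}$ gives
\[
	\widebar{S}(p)^{n,2} = \widetilde{S}^n(\xi^{n,0}) + \widetilde{S}^n(\xi^{n,1}) + \widetilde{S}^n(\xi^{n,2}) + \widetilde{S}^n(N(p)),
\]
so, since u.c.p.\ convergence is stable under finite sums, it suffices to verify (\ref{xi.cond}) with $a_n,b_n\to0$ for each of the four choices $e^n_i\in\{\xi^{n,0}_i,\xi^{n,1}_i,\xi^{n,2}_i,N(p)^n_i\}$.

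Next I would feed the moment bounds already established into (\ref{xi.cond}), one choice at a time. For $e^n_i=\xi^{n,0}_i$, estimate (\ref{est.xi(0)n}) gives conditional mean and conditional second moment both of order $k_n\Delta_n$, so (\ref{xi.cond}) holds with $a_n=k_n\Delta_n^{3/4}\asymp\Delta_n^{3/4-\kappa}$ and $b_n=a_n^2$; both vanish, and this is the one place where the upper bound $\kappa<3/4$ of (\ref{tuning}) is genuinely used. For $e^n_i=\xi^{n,1}_i$, (\ref{est.xi(1)n}) gives (\ref{xi.cond}) with $a_n=\Delta_n^{1/4}$ and $b_n=k_n\Delta_n\asymp\Delta_n^{1-\kappa}\to0$; for $e^n_i=\xi^{n,2}_i$, (\ref{est.xi(2)n}) gives $a_n=\Delta_n^{3/4}$ and $b_n=\Delta_n^{1/2}$; and for $e^n_i=N(p)^n_i$, (\ref{est.M(p).N(p)}) gives $a_n=p^{-1}\Delta_n^{1/4}$ and $b_n=p^{-1}$, which under the calibration $p\asymp\Delta_n^{-1/12}$ are of order $\Delta_n^{1/3}$ and $\Delta_n^{1/12}$ and hence vanish. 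In every case the weights $\partial_{jk}g(c^n_{ik_n})$ are $\F^n_{ik_n}$-measurable and uniformly bounded by (\ref{g.cond.strong}), which is exactly the structure the generic bound exploits through Lemma \ref{Doob.max.ineq}.

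With (\ref{xi.cond}) verified and $a_n,b_n\to0$ in each of the four cases, the generic bound yields $\E\big(\sup_{s\in[0,t]}\|\widetilde{S}^n(e)_s\|\big)\to0$ for $e\in\{\xi^{n,0},\xi^{n,1},\xi^{n,2},N(p)\}$, so each summand is u.c.p.-null; summing the four summands, then over the finitely many pairs $(j,k)$, and applying the triangle inequality gives $\widebar{S}(p)^{n,2}\overset{u.c.p.}{\longrightarrow}0$. I do not anticipate a genuine obstacle here: every moment estimate has been proved beforehand, and the only point needing care is the bookkeeping of exponents --- specifically, checking that the block discretization error $\xi^{n,0}$ still vanishes after multiplication by the prefactor $k_n\Delta_n^{3/4}$ (which is what pins down $\kappa<3/4$) and that the $N(p)^n_i$ term is annihilated precisely by the choice $p\asymp\Delta_n^{-1/12}$ rather than by some more slowly divergent $p$.
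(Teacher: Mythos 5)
Your proof is correct and follows essentially the same route as the paper: it realizes $\widebar{S}(p)^{n,2}$ as a sum of four processes of the generic form $\widetilde{S}^n$, verifies condition (\ref{xi.cond}) with the same rates $a_n,b_n$ the paper uses, and concludes via the generic bound and Lemma~\ref{Doob.max.ineq}. The only small inaccuracy is the closing remark that $p\asymp\Delta_n^{-1/12}$ is what "annihilates" the $N(p)$ term — in fact any $p\to\infty$ kills that term, and the precise rate is pinned down elsewhere (the $M(p)$ block analysis and the CLT in Lemma~\ref{lemma.v4}); this does not affect the validity of the argument.
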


Let $\chi^n_i = \widehat{\rr}^n_i - \rr^n_i$, by (\ref{classic}), the choice of $m_n$ and Jensen's inequality
\begin{equation}\label{est.chi}
	E^n_i(\|\chi^n_i\|^q) \le K_q\Delta_n^{q/4},\, q=1,2
\end{equation}
Let $\eta'^n_i=\lambda(\widehat{c}^{*n}_i,\widehat{\gamma}^n_i)-\lambda(c^n_i,\gamma^n_i)$, then by (\ref{tensors}), (\ref{def.Xi})
\begin{equation*}
	\|\eta'^n_i\| \le K\left(\|\beta^n_i\| + \|\chi^n_i\| + \|\beta^n_i\|^2 + \|\beta^n_i\|\|\chi^n_i\| + \|\chi^n_i\|^2\right)
\end{equation*}
hence by lemma \ref{est.beta}, (\ref{est.chi}), (\ref{tuning})
\begin{equation}\label{est.eta}
	E^n_i(\|\eta'^n_i\|^q) \le K_q(k_n\Delta_n^{1/2})^{-q/2},\hspace{2mm} q=1,2
\end{equation}

We can rewrite $\widebar{S}^{n,3}$ as
\[\widebar{S}^{n,3}=G^n+H^n\]
where
\begin{eqnarray*}
	G^n_t&=&k_n\Delta_n^{3/4}\sum_{i=0}^{N^n_t-1}\left[s^n_{ik_n}+u^n_{ik_n}+E^n_{ik_n}\left(v^n_{ik_n}\right)\right]\\
	H^n_t&=&k_n\Delta_n^{3/4}\sum_{i=0}^{N^n_t-1}\left[v^n_{ik_n}-E^n_{ik_n}\left(v^n_{ik_n}\right)\right]
\end{eqnarray*}
\begin{eqnarray*}
	s^n_i &=& g(c^n_i+\beta^n_i)-g(c^n_i)-\sum^d_{j,k=1}\partial_{jk}g(c^n_i)\beta_i^{n,jk}\\ 
	&&\hspace{45mm} - \frac{1}{2}\sum^d_{j,k,l,m=1}\partial^2_{jk,lm}g(c^n_i)\beta_i^{n,jk}\beta_i^{n,lm}\\
	u^n_i&=&\frac{1}{2k_n\Delta_n^{1/2}}\left[\xi(c^n_i,\rr^n_i) - \xi(\widehat{c}^{*n}_i,\widehat{\rr}^n_i)\right]\\
	v^n_i&=&\frac{1}{2}\sum^d_{j,k,l,m=1}\partial^2_{jk,lm}g(c^n_i)\left[\beta_i^{n,jk}\beta_i^{n,lm} - (k_n\Delta_n^{1/2})^{-1}\Xi(c^n_i,\rr^n_i)^{jk,lm}\right]
\end{eqnarray*}

By lemma \ref{est.beta}, (\ref{g.cond.strong}), (\ref{est.eta}), if we let $p\asymp\Delta_n^{-12}$, 
\begin{equation}\label{est.G}
	\E\left(\sup_{s\in[0,t]}\|G^n_s\|\right) \le Kt\left[k_n\Delta_n^{3/4} + (k_n\Delta_n^{2/3})^{-1}\right]
\end{equation}
and
\begin{multline*}
	E^n_i(\|v^n_i\|^2) \le K\sum^d_{j,k,l,m=1} E^n_i\left(\left|\beta_i^{n,jk}\beta_i^{n,lm} - \frac{1}{k_n\Delta_n^{1/2}}\Xi^{n,jk,lm}_i\right|^2\right)\\
	\le K\left[k_n\Delta_n + (k_n\Delta_n^{1/2})^{-2}\right]
\end{multline*}
then lemma \ref{Doob.max.ineq} implies
\begin{multline}\label{est.H}
	\E\left(\sup_{s\in[0,t]}\|H^n_s\|\right)\le Kk_n\Delta_n^{3/4}\left(\sum_{i=0}^{N^n_t-1}\E(\|v^n_{ik_n}\|^2)\right)^{1/2}\\
	\le K\sqrt{t}\left[k_n\Delta_n^{3/4} + (k_n\Delta_n^{1/2})^{-1/2}\right] 
\end{multline}

According to (\ref{tuning}), (\ref{est.G}), (\ref{est.H}), we have the following lemma:
\begin{lem}\label{lemma.v3}
	Assume assumption \ref{A-v}, \ref{A-r}, (\ref{g.cond.strong}), (\ref{tuning}) then
	$$\widebar{S}^{n,3}\overset{u.c.p.}{\longrightarrow}0$$
\end{lem}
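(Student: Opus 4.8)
\textit{Proof plan.}~The plan is to use the decomposition $\widebar{S}^{n,3}=G^n+H^n$ displayed above and to show, separately, that $\E\big(\sup_{s\in[0,t]}\|G^n_s\|\big)\to0$ and $\E\big(\sup_{s\in[0,t]}\|H^n_s\|\big)\to0$; since convergence to $0$ of the expected uniform norm on every compact interval is precisely $u.c.p.$ convergence, this yields the lemma. The decomposition itself I would obtain by writing $\widehat{c}^{*n}_i=c^n_i+\beta^n_i$ as in (\ref{errors.chat*}), Taylor-expanding $g$ at $c^n_i$ to third order so as to peel off the remainder $s^n_i$ and the exact Hessian term, and rewriting the de-biasing contribution as $\lambda(\widehat{c}^{*n}_i,\widehat{\gamma}^n_i)=\lambda(c^n_i,\gamma^n_i)+\eta'^n_i$; the frozen-coefficient part $(2k_n\Delta_n^{1/2})^{-1}\lambda(c^n_i,\gamma^n_i)$ then combines with the Hessian term into the mean-matching quantity $v^n_i$, while the perturbation $\eta'^n_i$ produces $u^n_i$. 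Splitting $v^n_i=E^n_i(v^n_i)+\big(v^n_i-E^n_i(v^n_i)\big)$ routes the conditional mean to $G^n$ and the martingale increment to $H^n$.

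For $G^n$ I would argue term by term, which is legitimate because $G^n$ is a $k_n\Delta_n^{3/4}$-weighted sum of conditionally small pieces rather than a martingale. The remainder obeys $|s^n_i|\le K\|\beta^n_i\|^3$ by boundedness of $\partial^3 g$ from (\ref{g.cond.strong}), so $E^n_i(|s^n_i|)\le Kk_n\Delta_n$ by lemma \ref{est.beta}; the term $u^n_i$ is $(2k_n\Delta_n^{1/2})^{-1}$ times $\eta'^n_i$, controlled through the Lipschitz estimate behind (\ref{est.eta}) to give $E^n_i(|u^n_i|)\le K(k_n\Delta_n^{1/2})^{-3/2}$; and $\|E^n_i(v^n_i)\|$ is governed by the second-order moment matching $\big|E^n_i(\beta^{n,jk}_i\beta^{n,lm}_i)-(k_n\Delta_n^{1/2})^{-1}\Xi(c^n_i,\gamma^n_i)^{jk,lm}\big|\le K\big[k_n\Delta_n+p^{-1}(k_n\Delta_n^{1/2})^{-1}\big]$ of lemma \ref{est.beta}, with the bounded factor $\partial^2 g(c^n_i)$ absorbed via (\ref{g.cond.strong}). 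Summing over the $N^n_t\asymp(k_n\Delta_n)^{-1}$ blocks, using $k_n\Delta_n N^n_t\asymp t$ and $p\asymp\Delta_n^{-1/12}$, then produces the bound (\ref{est.G}).

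The more delicate piece is $H^n$: it is a sum over $N^n_t\to\infty$ blocks whose individual conditionally centred increments $v^n_{ik_n}-E^n_{ik_n}(v^n_{ik_n})$ are too large to be summed crudely, so instead I would exploit that these form a martingale difference sequence for $(\F^n_{ik_n})_i$ — the block-$j$ spot estimator depends only on data up to time $(j+1)k_n\Delta_n$, hence $v^n_{jk_n}$ is $\F^n_{ik_n}$-measurable for $j\le i-1$ — and apply lemma \ref{Doob.max.ineq} with $Z_i=v^n_{ik_n}$. The resulting $L^2$-orthogonality converts the maximal expectation into $k_n\Delta_n^{3/4}\big(\sum_i\E\|v^n_{ik_n}\|^2\big)^{1/2}$; feeding in $E^n_i(\|v^n_i\|^2)\le K\big[k_n\Delta_n+(k_n\Delta_n^{1/2})^{-2}\big]$, itself a consequence of the fourth-moment and covariance estimates of lemma \ref{est.beta} together with (\ref{g.cond.strong}), and using $N^n_t\asymp(k_n\Delta_n)^{-1}$, yields (\ref{est.H}). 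Finally I would substitute $k_n\asymp\varrho\Delta_n^{-\kappa}$ from (\ref{tuning}): $k_n\Delta_n^{3/4}\asymp\Delta_n^{3/4-\kappa}\to0$ since $\kappa<3/4$, $(k_n\Delta_n^{2/3})^{-1}\asymp\Delta_n^{\kappa-2/3}\to0$ since $\kappa>2/3$, and $(k_n\Delta_n^{1/2})^{-1/2}\asymp\Delta_n^{(\kappa-1/2)/2}\to0$ since $\kappa>1/2$; thus both $\E\big(\sup_{[0,t]}\|G^n\|\big)$ and $\E\big(\sup_{[0,t]}\|H^n\|\big)$ vanish, giving $\widebar{S}^{n,3}\overset{u.c.p.}{\longrightarrow}0$. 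The main obstacle is exactly the $H^n$ step: a term-by-term bound diverges like $N^n_t$ times the size of a single increment, so one genuinely needs the martingale structure and $L^2$-orthogonality of the blocks; moreover the whole scheme hinges on the sharpness of the second-order expansion of $\beta^n_i$ in lemma \ref{est.beta} — precisely what the correction term $B(g)^n_i$ in $\widehat{S}(g)^n$ is designed to cancel — since a coarser expansion would prevent $E^n_i(v^n_i)$ or $E^n_i(\|v^n_i\|^2)$ from closing under (\ref{tuning}).
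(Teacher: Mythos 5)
Your proposal follows exactly the paper's argument: the same decomposition $\widebar S^{n,3}=G^n+H^n$ into a conditionally small drift part and a martingale part, the same term-by-term moment bounds on $s^n_i$, $u^n_i$, $E^n_i(v^n_i)$ via Lemma~\ref{est.beta} and (\ref{est.eta}), the same use of Lemma~\ref{Doob.max.ineq} on the centred increments $v^n_{ik_n}-E^n_{ik_n}(v^n_{ik_n})$, and the same substitution of (\ref{tuning}) at the end. The added detail on the block-measurability making $(v^n_{ik_n})_i$ a martingale difference array for $(\F^n_{ik_n})_i$ is correct and just makes explicit what the paper leaves implicit.
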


\subsection{Stable convergence in law to a continuous It\^o semimartingale}
Recall (\ref{def.betavars}), we can write 
\begin{equation*}
	\widebar{S}(p)^{n,4}_t=\frac{k_n}{k_n-l_n}\sum^d_{j,k=1}\Delta_n^{-1/4}\sum^{N^n_t-1}_{i=0}\sum_{h=0}^{m(n,p)-1}\zeta(p)^{n,jk}_{ik_n,h}\times\partial_{jk}g(c^n_{ik_n})
\end{equation*}

Let $\mathcal{H}(p)^n_{i,h}=\F^n_{ik_n+a(n,p,h)}$, by lemma \ref{est.zetaY},
\begin{equation*}
	\Delta_n^{-1/2}\sum^{N^n_t-1}_{i=0}\sum_{h=0}^{m(n,p)-1}\left\|E\big[\zeta(p)^n_{ik_n,h}|\mathcal{H}(p)^n_{i,h}\big]\right\|^2 \le Ktp\Delta_n
\end{equation*}

Let $\Lambda(p)^n_{i,h}=\partial g(c^n_{ik_n})\zeta(p)^n_{ik_n,h}$, $N$ is a bounded martingale orthogonal to $W$ or $N=W^l$ for some $l=1,\cdots,d'$, and $\Delta N(p)^n_{i,h}=N^n_{ik_n+b(n,p,h)}-N^n_{ik_n+a(n,p,h)}$. The following 4 statements about convergence in probability for any indices $j,k,l,m$ can verify the conditions of theorem IX.7.28 in \cite{js03}:
\begin{eqnarray}
	\Delta_n^{-1/4}\sum_{i=0}^{N^n_t-1}\sum_{h=0}^{m(n,p)-1}\left\|E\left[\Lambda(p)^n_{i,h}|\mathcal{H}(p)^n_{i,h}\right]\right\|&\overset{\mathbb{P}}{\longrightarrow}&0\label{V4.c1}\\
	\Delta_n^{-1}\sum_{i=0}^{N^n_t-1}\sum_{h=0}^{m(n,p)-1}E\left[\left\|\Lambda(p)^n_{i,h}\right\|^4|\mathcal{H}(p)^n_{i,h}\right]&\overset{\mathbb{P}}{\longrightarrow}&0\label{V4.c2}\\
	\Delta_n^{-1/4}\sum_{i=0}^{N^n_t-1}\sum_{h=0}^{m(n,p)-1}\left\|E\big[\Lambda(p)^n_{i,h}\Delta N(p)^n_{i,h}|\mathcal{H}(p)^n_{i,h}\big]\right\| &\overset{\mathbb{P}}{\longrightarrow}&0\label{V4.c3}
\end{eqnarray}
\begin{multline}\label{V4.c4}
	\Delta_n^{-1/2}\sum_{i=0}^{N^n_t-1}\sum_{h=0}^{m(n,p)-1}\partial_{jk}g(c^n_{ik_n})\partial_{lm}g(c^n_{ik_n})^\T E\left[\zeta(p)^{n,jk}_{ik_n,h}\zeta(p)^{n,lm}_{ik_n,h}|\mathcal{H}(p)^n_{i,h}\right]\\
	\overset{\mathbb{P}}{\longrightarrow} \int_0^t\partial_{jk}g(c_s)\partial_{lm}g(c_s)^\T\,\Xi(c_s, \gamma_s; p)^{jk,lm}\,\mathrm{d}s
\end{multline}

Under (\ref{g.cond.strong}), one can verify (\ref{V4.c1}), (\ref{V4.c2}) by the second and third claims of lemma \ref{est.zetaY}, respectively. The same argument as that for (5.58) in \cite{j09} leads to (\ref{V4.c3}). By the last claim of lemma \ref{est.zetaY}, the left-hand side of (\ref{V4.c4}) equals
\begin{multline*}
	\sum_{i=0}^{N^n_t-1}\sum_{h=0}^{m(n,p)-1}\partial_{jk}g(c^n_{ik_n})\partial_{lm}g(c^n_{ik_n})^\T\times\\ \Xi\big(c^n_{ik_n+a(n,p,h)}, \rr^n_{ik_n+a(n,p,h)}; p\big)^{jk,lm}(p+1)l_n\Delta_n + tp\,O_p(\Delta_n^{1/4})
\end{multline*}
then (\ref{V4.c3}) is verified by Riemann summation. By theorem IX.7.28 in \cite{js03} we have the following lemma:

\begin{lem}\label{lemma.v4}
	Assume assumption \ref{A-v}, \ref{A-r}, (\ref{g.cond.strong}), (\ref{tuning}), then for $\forall p\in\mathbb{N}^+$,
	\[\widebar{S}(p)^{n,4}\overset{\mathcal{L}-s(f)}{\longrightarrow}Z(p)\]
	where $Z(p)$ is a process defined on an extension of the space  $\left(\Omega,\mathcal{F},(\mathcal{F}_t),\mathbb{P}\right)$,
	such that conditioning on $\mathcal{F}$ it is a  mean-0 continuous It\^o martingale with variance
	\[\widetilde{E}[Z(p)Z(p)^\T|\F]=\int_0^t\sum^d_{j,k,l,m=1}\partial_{jk}g(c_s)\partial_{lm}g(c_s)^\T\,\Xi(c_s, \gamma_s; p)^{jk,lm}\ds s\]
	where $\widetilde{E}$ is the conditional expectation operator on the extended probability space and $\Xi(x,z;p)$ is defined in (\ref{def.Xi(p)}).
\end{lem}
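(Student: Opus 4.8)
The plan is to read off from the displayed rewriting of $\widebar{S}(p)^{n,4}$ that, modulo the harmless factor $k_n/(k_n-l_n)\to1$, it equals $\Delta_n^{-1/4}\sum_{i=0}^{N^n_t-1}\sum_{h=0}^{m(n,p)-1}\Lambda(p)^n_{i,h}$ with $\Lambda(p)^n_{i,h}=\sum_{j,k}\partial_{jk}g(c^n_{ik_n})\,\zeta(p)^{n,jk}_{ik_n,h}$. The point of the construction of $a(n,p,h)$, $b(n,p,h)$ and $m(n,p)$ is that the small blocks $[ik_n+a(n,p,h),\,ik_n+b(n,p,h)]$, on which $\zeta(p)^n_{ik_n,h}$ is measurable, are pairwise disjoint --- consecutive small blocks are separated by a gap of $l_n$ indices. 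Ordering the pairs $(i,h)$ lexicographically and putting $\mathcal{H}(p)^n_{i,h}=\F^n_{ik_n+a(n,p,h)}$, the centred summands $\Lambda(p)^n_{i,h}-E[\Lambda(p)^n_{i,h}\mid\mathcal{H}(p)^n_{i,h}]$ form a martingale-difference array, and the limit result to quote is Theorem IX.7.28 of \cite{js03}. With the reduction to scalar $g$ already in force, I would fix indices $j,k,l,m$ and verify its four hypotheses, which are precisely (\ref{V4.c1})--(\ref{V4.c4}): negligibility of the conditional drift, a fourth-moment Lyapunov bound, orthogonality to $W^1,\dots,W^{d'}$ and to every bounded martingale orthogonal to $W$, and convergence of the predictable quadratic covariation.

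Conditions (\ref{V4.c1}) and (\ref{V4.c2}) are counting exercises on top of Lemma \ref{est.zetaY}. Using $\|\partial g\|\le K$ from (\ref{g.cond.strong}) and $\|E^n_i[\zeta(p)^n_i]\|\le Kp\Delta_n$, together with $N^n_t\,m(n,p)\asymp(pl_n\Delta_n)^{-1}$ summands, the left side of (\ref{V4.c1}) is $O\big(\Delta_n^{-1/4}(pl_n\Delta_n)^{-1}\,p\Delta_n\big)=O(\Delta_n^{1/4})\to0$; squaring the same estimate yields the preliminary bound $\Delta_n^{-1/2}\sum\|E[\zeta(p)^n_{ik_n,h}\mid\mathcal{H}(p)^n_{i,h}]\|^2\le Ktp\Delta_n$. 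Similarly $E^n_i(\|\zeta(p)^n_i\|^4)\le Kp^2\Delta_n^2$ makes the left side of (\ref{V4.c2}) of order $\Delta_n^{-1}(pl_n\Delta_n)^{-1}p^2\Delta_n^2=O(p\Delta_n^{1/2})\to0$ for fixed $p$. For (\ref{V4.c3}) I would follow the argument for (5.58) in \cite{j09}: $\zeta(p)^n_{ik_n,h}$ is a fixed polynomial in the increments $\widebar{W}^n_\cdot$ and in the noise residuals $\widebar{Y}^{*n}_\cdot-\widebar{X}^n_\cdot$ over the small block, and the latter are, conditionally on $\F^{(0)}$, independent of $W$ and of any bounded $N\perp W$; hence the conditional covariation with $\Delta N(p)^n_{i,h}$ is either identically zero or is dominated by the moment bounds already collected, and the stated rate follows.

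The substantive step is (\ref{V4.c4}). Inserting the covariance expansion from the last claim of Lemma \ref{est.zetaY}, $E^n_i[\zeta(p)^{n,jk}_i\zeta(p)^{n,lm}_i]=(p+1)\theta\Delta_n\,\Xi(c^n_i,\gamma^n_i;p)^{jk,lm}+O(\Delta_n^{5/4})$, evaluated at the shifted index $ik_n+a(n,p,h)$, and using $\Delta_n^{-1/2}(p+1)\theta\Delta_n\asymp(p+1)l_n\Delta_n$ (the time length of one small block plus its gap), the left side of (\ref{V4.c4}) becomes $\sum_i\sum_h\partial_{jk}g(c^n_{ik_n})\partial_{lm}g(c^n_{ik_n})^\T\,\Xi(c^n_{ik_n+a(n,p,h)},\gamma^n_{ik_n+a(n,p,h)};p)^{jk,lm}\,(p+1)l_n\Delta_n$ plus a remainder $tp\,O_p(\Delta_n^{1/4})$. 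Replacing $c^n_{ik_n},\gamma^n_{ik_n}$ by their values at the interior points $ik_n+a(n,p,h)$ costs only lower-order errors by (\ref{classic}) and path continuity of $c$, and the leading double sum is a Riemann sum over $\{ik_n\Delta_n\}$ refined by the small blocks, converging in probability to $\int_0^t\partial_{jk}g(c_s)\partial_{lm}g(c_s)^\T\,\Xi(c_s,\gamma_s;p)^{jk,lm}\ds s$. Summing over $j,k,l,m$ gives the conditional variance claimed in the statement, so Theorem IX.7.28 of \cite{js03} delivers the $\F$-conditionally Gaussian continuous-martingale limit $Z(p)$ on an extension of $(\Omega,\F,(\F_t),\mathbb{P})$ together with functional stable convergence.

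The main obstacle I anticipate is the bookkeeping inside (\ref{V4.c4}): the covariance expansion of Lemma \ref{est.zetaY} is written for one small block started at a generic stopping time, and composing it over the big-block/small-block decomposition requires keeping exact track of the factor $(p+1)$ (block length $pl_n$ plus gap $l_n$), the floors hidden in $m(n,p)$ and $N^n_t$, the finitely many indices at the ends of big blocks not covered by full small blocks, and the fact that the summand carries $\Xi(\cdot;p)$ rather than its $p\to\infty$ limit $\Xi(\cdot)$. One must check that the accumulated remainder is still $o_p(1)$ after the $\Delta_n^{-1/2}$ rescaling --- the counting has to give $\asymp((p+1)l_n\Delta_n)^{-1}$ small blocks per unit time, each contributing $O(\Delta_n)$ to the squared summand, so the errors sum to a vanishing quantity under (\ref{tuning}). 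Everything else is a routine verification of the hypotheses of \cite{js03}.
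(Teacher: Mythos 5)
Your proposal is correct and follows essentially the same route as the paper: the identical rewriting of $\widebar{S}(p)^{n,4}$, the same filtration $\mathcal{H}(p)^n_{i,h}=\F^n_{ik_n+a(n,p,h)}$, verification of conditions (\ref{V4.c1})--(\ref{V4.c4}) via the moment bounds in Lemma~\ref{est.zetaY} and the orthogonality argument from (5.58) in \cite{j09}, and finally an appeal to Theorem~IX.7.28 of \cite{js03}. You simply spell out the counting ($N^n_t\,m(n,p)\asymp(pl_n\Delta_n)^{-1}$ blocks) and the Riemann-sum step in (\ref{V4.c4}) that the paper compresses into two sentences.
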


By (\ref{v0}), lemma \ref{lemma.v1}, \ref{lemma.v2}, \ref{lemma.v3}, \ref{lemma.v4}, and $\Xi(x,z;p)\to\Xi(x,z)$ as $p\to\infty$, we arrive at the asymptotic result in section \ref{sec:asymp}.


\end{document}